\tikzset{ext/.style={circle, draw,inner sep=1pt},int/.style={circle,draw,fill,inner sep=1pt},nil/.style={inner sep=1pt}}
\tikzset{exte/.style={circle, draw,inner sep=3pt},inte/.style={circle,draw,fill,inner sep=3pt}}
\tikzset{diagram/.style={matrix of math nodes, row sep=3em, column sep=2.5em, text height=1.5ex, text depth=0.25ex}}
\tikzset{diagram2/.style={matrix of math nodes, row sep=0.5em, column sep=0.5em, text height=1.5ex, text depth=0.25ex}}
\theoremstyle{plain}
  \newtheorem{thm}{Theorem}
  \newtheorem{prop}{Proposition}
  \newtheorem{cor}[prop]{Corollary}
  \newtheorem{lemma}{Lemma}
\theoremstyle{definition}
  \newtheorem{ex}{Example}
  \newtheorem{rem}{Remark}
\newcommand{\alg}[1]{\mathfrak{{#1}}}
\newcommand{\R}{{\mathbb{R}}}
\newcommand{\Z}{{\mathbb{Z}}}
\newcommand{\K}{{\mathbb{K}}}
\newcommand{\fGCc}{{\mathrm{fcGC}}}
\newcommand{\Graphs}{{\mathsf{Graphs}}}
\newcommand{\te}{{\tilde{\mathsf{e}}}}
\newcommand{\fcGraphs}{{\mathsf{fcGraphs}}}
\newcommand{\dfcGraphs}{{\mathsf{dfcGraphs}}}
\newcommand{\fcGraphso}{\mathsf{fcGraphs}^{or}}
\newcommand{\Graphso}{\mathsf{Graphs}^{or}}
\newcommand{\Def}{\mathrm{Def}}
\newcommand{\op}{\mathcal}
\newcommand{\Lie}{\mathsf{Lie}}
\newcommand{\hoLie}{\mathsf{hoLie}}
\newcommand{\Com}{\mathsf{Com}}
\newcommand{\Der}{\mathrm{Der}}
\newcommand{\conn}{\mathit{conn}}
\newcommand{\bpm}{\begin{pmatrix}}
\newcommand{\epm}{\end{pmatrix}}
\newcommand{\Tpoly}{T_{\rm poly}}
\newcommand{\Dpoly}{D_{\rm poly}}
\newcommand{\GC}{\mathrm{GC}}
\newcommand{\GCo}{\mathrm{GC}^{or}}
\newcommand{\fGC}{\mathrm{fGC}}
\newcommand{\fGCo}{\mathrm{fGC}^{or}}
\newcommand{\fGCco}{\mathrm{fcGC}^{or}}
\newcommand{\dfGCc}{\mathrm{dfcGC}}
\newcommand{\dfGC}{\mathrm{dfGC}}
\newcommand{\GCor}{\mathrm{GC}^{or}}
\newcommand{\hGCor}{\widehat{\mathrm{GC}}^{or}}
\DeclareMathOperator{\End}{End}
\DeclareMathOperator{\sgn}{sgn}
\newcommand{\grt}{\alg {grt}}
\newcommand{\hoe}{\mathsf{hoe}}
\newcommand{\e}{\mathsf{e}}
\newcommand{\dimens}{\mathop{dim}}
\newcommand{\gra}{\mathrm{gra}}
\begin{document}
\title{The oriented graph complexes}
\author{Thomas Willwacher}
\address{Department of Mathematics\\ ETH Zurich\\ R\"amistrasse 101 \\ 8092 Zurich, Switzerland}
\email{t.willwacher@gmail.com}


\begin{abstract}
Oriented graph complexes, in which graphs are not allowed to have oriented cycles, govern for example the quantization of Lie bialgebras and infinite dimensional deformation quantization.
It is shown that the oriented graph complex $\GCor_n$ is quasi-isomorphic to the ordinary commutative graph complex $\GC_{n-1}$, up to some known classes.
This yields in particular a combinatorial description of the action of $\grt_1\cong H(\GC_2)$ on Lie bialgebras, and shows that a cycle-free formality morphism in the sense of Shoikhet can be constructed rationally without reference to configuration space integrals.
\end{abstract}
\maketitle

\section{Introduction}
Graph complexes are differential graded vector spaces whose elements are linear combinations or series of isomorphism classes of graphs. 
Various flavors of graph complexes exist, depending on the type of graphs that are allowed in the series. The most commonly encountered complexes are the ribbon graph complexes, which compute the cohomology of moduli spaces of curves, the ``Lie'' graph complex which computes the cohomologies of the automorphism groups of free groups \cite{cullervogtmann}, and the ``commutative'' graph complexes which govern the deformations of the $E_n$ operads \cite{grt}.
One may also define various other versions of graph complexes. For example, one can define a graph complex starting from any cyclic operad and/or by allowing for external legs, possibly with some decoration; see \cite{LambrechtsTurchin} for an application to low dimensional topology and also \cite{vogtmannhairy}.

The common feature of all these graph complexes is that their cohomology is very hard to compute, and usually only very few facts are known beyond the computer accessible regime.

In this paper we will study a version of graph complexes first introduced (to the knowledge of the author) by S. Merkulov, the oriented graph complexes $\GC_n^{or}$, whose elements are linear combinations of isomorphism classes of directed graphs, which do not contain oriented loops. In other words, the directions of the edges naturally endow the set of vertices with the structure of a partially ordered set. For a more precise definition of these complexes see section \ref{sec:graphcomplexes}. 
The complexes $\GC_n^{or}$ carry a natural dg Lie algebra structure.

These oriented graph complexes appear for example in the quantization of Lie bialgebras (for $n=3$), where they act on Lie bialgebra structures, and in infinite dimensional deformation quantization (for $n=2$), see also \cite{shoikhetlinfty}.

Slightly different versions of oriented graph complexes with external legs have been considered in \cite{MarklVoronov}. These complexes are highly related, for an indication of the link see section \ref{sec:beardGC}.

The main result of this paper is a computation of the cohomology of the oriented graph complexes.
\begin{thm}\label{thm:main}
 The cohomology of the oriented graph complex $\GC_n^{or}$ is isomorphic to the cohomology of the ordinary commutative graph complex $\fGCc_{n-1}$ as dg Lie algebra. In particular
\[
 H(\GC_n^{or})\cong H(\fGCc_{n-1})=H(\GC_{n-1})\oplus \bigoplus_{\substack{j\geq 1 \\ j\equiv 2n+1 \text{ mod }4 }}
 \K[n-j].
\]
Furthermore, the identification preserves the additional grading by the first Betti numbers of graphs on both sides.
\end{thm}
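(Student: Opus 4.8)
The plan is to construct a zigzag of quasi-isomorphisms of dg Lie algebras from $\GC_n^{or}$ to $\fGCc_{n-1}$ that is compatible with the grading by the first Betti number; the explicit formula for $H(\GC_n^{or})$ then drops out by substituting the already-known decomposition of $H(\fGCc_{n-1})$ as $H(\GC_{n-1})$ together with the one-loop classes $\K[n-j]$, $j\equiv 2n+1\bmod 4$, furnished by the comparison of the full and reduced commutative graph complexes. A preliminary reduction replaces both sides by their ``full'' variants, allowing vertices of valence $1$ and $2$: on the oriented side there are three species of bivalent vertex (source-like, sink-like and pass-through), and the standard Euler-characteristic/line-subcomplex argument shows that only chains of pass-through vertices can pile up, contributing exactly the usual ``line'' classes, which one then absorbs. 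After this the real content is the comparison of the two full complexes.

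The morally correct comparison is ``forget the edge orientations''. On a connected oriented graph with $V$ vertices and $E$ edges this changes the homological degree by a fixed multiple of $V-E=1-b_1$, so, after re-suspending $\fGCc_{n-1}$ genus by genus -- legitimate because the differential on either side preserves $b_1$ -- it becomes degree-preserving, and this is precisely the genus-dependent shift appearing in the theorem, the exact normalization being pinned down by the degree of the one-loop classes on the two sides. It is, however, neither a chain map on the nose nor obviously a quasi-isomorphism: a symmetric unoriented graph can vanish in $\fGCc_{n-1}$ yet admit acyclic orientations surviving in $\GC_n^{or}$, and conversely. I would therefore realize it only as the \emph{leading} term of a differential on an auxiliary complex.

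Concretely, introduce a complex of directed acyclic graphs with a two-part differential: one part is vertex splitting (which, upon forgetting orientations, reproduces the differential of $\fGCc_{n-1}$), the other a purely combinatorial ``orientation-bookkeeping'' differential, e.g.\ one that deletes or contracts an edge at a canonically chosen source or sink. Filter so that the bookkeeping part is leading; the first page of the resulting spectral sequence is then governed by an acyclicity lemma asserting that the bookkeeping complex of connected directed acyclic graphs is acyclic except for one copy of (genus-wise re-suspended) $\fGCc_{n-1}$, the surviving generators being pinned down by contracting the chosen edge. The filtration is bounded in each fixed genus and fixed edge number, so the spectral sequence converges; it then collapses for degree reasons, provided one checks that the higher differentials cannot recombine the surviving classes -- in particular they cannot mix genera, the grading by $b_1$ being preserved throughout the construction. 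Finally, the dg Lie bracket on $\GC_n^{or}$ is induced by insertion of graphs, and since every map in the zigzag is built from operations compatible with graph insertion, the identification is automatically one of dg Lie algebras, compatibly with the $b_1$-grading.

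\textbf{Main obstacle.} The crux is the acyclicity lemma for the bookkeeping complex: one must make the ``canonical source/sink and edge'' well defined on isomorphism classes in spite of graph automorphisms, show that the induced contracting homotopy kills everything outside the predicted $\fGCc_{n-1}$-part, and deal carefully with the low-valence vertices and the would-be one-loop classes, precisely where the homotopy is most delicate. The remaining technical point is to verify that no higher differential of the spectral sequence reintroduces cancellations among the surviving classes; by the $b_1$-grading this is a bounded, essentially combinatorial verification, but it is the place where the precise degrees -- and hence the residue condition $j\equiv 2n+1\bmod 4$ -- get matched against the known structure of $H(\fGCc_{n-1})$.
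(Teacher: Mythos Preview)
Your strategy is genuinely different from the paper's, and in its present form it has a real gap.

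\textbf{What the paper does.} The paper never attempts a direct combinatorial comparison of the two graph complexes. Instead it passes through operadic deformation theory. The key technical input is a new result (Theorem~\ref{thm:enandGraphsor}): the operad $\Graphso_{n+1}$ of oriented acyclic graphs with external legs is a model for $\e_n$. Since $\GCo_{n+1}$ acts on $\Graphso_{n+1}$ by operadic derivations, one gets a map
\[
\K\oplus\GCo_{n+1}\longrightarrow \Def(\hoe_n\to\Graphso_{n+1})_{\conn}[1],
\]
and Proposition~\ref{prop:premain} shows this is a quasi-isomorphism (via the intermediate hairy complex $\hGCor_{n+1}$ and Proposition~\ref{prop:GChGC}). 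On the other hand, it is already known from \cite{grt} that $\K\oplus\fGCc_n$ is quasi-isomorphic to $\Def(\hoe_n\to\fcGraphs_n)_{\conn}[1]$. Both deformation complexes are linked by a zig-zag through $\Def(\hoe_n\to\te_n)_{\conn}$ and $\Def(\hoe_n\to\e_n)_{\conn}$, using that $\te_n$, $\e_n$, $\fcGraphs_n$ and $\Graphso_{n+1}$ are all models for $\e_n$. The Lie-algebra statement then follows because both graph complexes map as Lie algebras into $H(\Der(\hoe_n))$ with the same image.

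\textbf{Where your plan breaks.} The heart of your argument is the unproved ``acyclicity lemma'' for a bookkeeping differential that contracts an edge at a canonically chosen source or sink, and you already flag the difficulty of making this canonical under automorphisms. But there is a more basic problem: the correspondence of Theorem~\ref{thm:main} does \emph{not} preserve the underlying unoriented graph, even up to a single edge contraction. For instance (see Remark~\ref{rem:wheelclasses}) the $k$-wheel in $\fGCc_n$ corresponds to the $(k{+}1)$-wheel with alternating edge directions in $\GCo_{n+1}$; the $\theta$-graph in $\fGCc_1$ (two vertices, three edges) corresponds to the Shoikhet cocycle in $\GCo_2$ (four vertices, five edges). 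So ``forget orientations'' is not the right leading-order map, and a bookkeeping differential that contracts a single edge cannot produce the correct $E_1$-page in one step. Any spectral sequence realizing the comparison would have to be considerably more elaborate than what you sketch, and you have not supplied it. Likewise, your argument that the zigzag respects the Lie bracket (``built from operations compatible with graph insertion'') is not an argument: in the paper this step genuinely uses that both complexes act on a common operad $\hoe_n$, which is precisely the structure your approach avoids.
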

For a definition of the complexes involved, see section \ref{sec:graphcomplexes}.
During the proof of the Theorem we will also give a method for mapping graph cocycles in the commutative graph complex to cocycles in $\GC_n^{or}$ or vice versa.

There are two main applications of this result that we are aware of. 
First, for $n=3$ degree 0 graph cocycles in $\GC_n^{or}$ act naturally on Lie bialgebra structures. Since $H^0(\GC_3^{or})=H^0(\fGCc_{2})=\grt_1$ is the Grothendieck-Teichm\"uller Lie algebra (see \cite{grt}), we recover the action of $\grt_1$ on Lie bialgebra structures. This action had been constructed before, but through a relatively complicated and inexplicit argument.
Theorem \ref{thm:main} above yields a relatively explicit description, in particular since explicit integral formulas for the graph cocycles corresponding to Deligne-Drinfeld elements (and conjectural generators) $\sigma_3, \sigma_5,\dots\in \grt_1$ are known, see \cite{carlothomas}.

The second application of Theorem \ref{thm:main} is in infinite dimensional deformation quantization. In finite dimensions the deformation quantization problem has been solved by M. Kontsevich by proving his Formality Conjecture, asserting the existence of an $L_\infty$ quasi-ismorphism
\[
 \Tpoly \to \Dpoly
\]
between the Lie algebra of multivector fields $\Tpoly$ on some manifold $M$ and the differential graded Lie algebra of multidifferential Hochschild cochains $\Dpoly$, for details see \cite{K1}.
When introducing the conjecture \cite{Kgrcomplex} Kontsevich noted that the obstruction theory of such a formality morphism\footnote{To be precise, in the local case $M=\R^d$ and for formality morphism given by universal formulas. } is governed by the Lie algebra $\GC_2\subset \fGCc_2$. In particular, obstructions to the existence of a formality morphism fall in $H^1(\GC_2)$. It is a famous conjecture, called the Drinfeld-Kontsevich conjecture in this paper, that $H^1(\GC_2)=0$. Unfortunately, this conjecture has resisted significant efforts by many people over the last 20 years. M. Kontsevich finally proved his formality conjecture through a different route by an explicit construction using configuration space integrals. V. Drinfeld, who faced the equivalent problem of constructing a Drinfeld associator also had to resort to transcendental methods and constructed the Knizhnik-Zamolodchikov associator.

M. Kontsevich's formality morphism cannot be used in the infinite dimensional setting. The problem is that graphs he used in the definition may contain directed cycles, which lead to divergences.
The infinite dimensional deformation quantization problem was treated by B. Shoikhet \cite{shoikhetlinfty}. He showed that there is an obstruction to constructing a formality morphism given by universal formulas in infinite dimensions.
This obstruction is implicit also in the earlier work of Penkava and Vanhaecke \cite{penkavavanhaecke}.
However, Shoikhet showed that the $L_\infty$ structure on $\Tpoly$ may be modified so as to obtain an $L_\infty$ algebra $\Tpoly'$, such that there exists a formality morphism
\[
 \Tpoly' \to \Dpoly.
\]
Shoikhet uses configuration space integrals similar to Kontsevich's to construct both the modified $L_\infty$ structure on $\Tpoly$ and the formality morphism.

Theorem \ref{thm:main} above allows one to fully understand the obstruction theory underlying Shoikhet's construction.
First, obstructions to the existence of a formality morphism $\Tpoly\to \Dpoly$ by universal formulas without loops fall into $H^1(\GC_2^{or})\cong H^1(\GC_1)\cong \K \theta$.
On the right hand side the single class corresponds to the $\theta$-graph 
\[
 \theta = 
\begin{tikzpicture}
 [baseline=-.65ex]
 \node[int] (v) at (0,0) {};
 \node[int] (w) at (1,0) {};
\draw (v) edge[bend left] (w) edge[bend right] (w) edge (w);
\end{tikzpicture},
\]
hence the notation. The corresponding graph cocycle in $\GC_2^{or}$ is the obstruction encountered by Penkava-Vanhaecke and Shoikhet, and is depicted in Example \ref{ex:shoiobstruction} below. 
The set of $L_\infty$ structures on $\Tpoly$ given by universal formulas without loops is also governed by $\GC_2^{or}$, and the fact that $H^1(\GC_2^{or})\cong \K$ shows that they form a one-dimensional space. The one dimension is exhausted by more or less trivial rescalings, so up to those the Shoikhet $L_\infty$ structure is unique.

Once an appropriate $L_\infty$ structure is fixed, determined by a Maurer-Cartan element $m\in \GC_2^{or}$, the existence of a formality morphism is unobstructed.
Let us summarize these results.

\begin{cor}\label{cor:looplessformality}
 Shoikhet's $L_\infty$ structure on $\Tpoly$ is essentially the unique (up to rescalings and gauge) $L_\infty$ structure given by loop-less universal formulas.
The existence of a formality morphism by loopless universal formulas is unobstructed and such a morphism is unique up to homotopy. All constructions of these objects can be done rationally, without appeal to transcendental methods.
\end{cor}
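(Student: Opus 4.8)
The plan is to reduce every assertion in the Corollary to a statement about $H^{\le 1}(\GC_2^{or})$ and then invoke Theorem~\ref{thm:main}. First I would recall the dictionary between universal operations on $\Tpoly=T_{\rm poly}(\R^\infty)$ built out of graphs and the graph complexes, going back to Kontsevich \cite{Kgrcomplex} (see also section~\ref{sec:graphcomplexes}): under it, the differential graded Lie algebra controlling $L_\infty$-structures on $\Tpoly$ given by universal formulas without tadpoles and without oriented cycles is, after splitting off an acyclic summand that records the binary Schouten bracket and the trivial rescalings, quasi-isomorphic to $\GC_2^{or}$, with the ordinary Schouten structure as base point and Maurer--Cartan elements computing its loop-less universal deformations. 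Likewise, using the oriented (two-coloured) analogue of the identification of the deformation complex of a Kontsevich-type formality morphism with a shift of the source graph complex -- which rests on the formality of $\Dpoly$ and, on the oriented side, on the techniques developed in the body of the paper -- the deformation complex of a loop-less universal formality morphism $\Tpoly'\to\Dpoly$ is again quasi-isomorphic to $\GC_2^{or}$ in the range of degrees we need.

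Next I would feed in Theorem~\ref{thm:main} with $n=2$. The extra classes of $\fGCc_1$ occurring there do not meet degrees $0$ and $1$, so, together with the ensuing discussion, Theorem~\ref{thm:main} gives $H^1(\GC_2^{or})\cong H^1(\GC_1)\cong\K\theta$ and $H^0(\GC_2^{or})\cong H^0(\GC_1)=0$. Hence the set of loop-less universal $L_\infty$-structures on $\Tpoly$, modulo gauge, is at most one-dimensional with tangent space $\K\theta$; the $\theta$-direction is realised honestly by the rescalings $v\mapsto\lambda v$ (which multiply the $k$-ary bracket by a fixed power of $\lambda$), so the whole space is the rescaling line through a single structure. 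Since the undeformed Schouten structure carries the non-zero Penkava--Vanhaecke--Shoikhet obstruction $\theta$ to the existence of a formality morphism (Example~\ref{ex:shoiobstruction}) while Shoikhet's structure does not, Shoikhet's structure is a non-zero point of this space, hence the unique one up to rescaling and gauge. (Alternatively one matches Shoikhet's configuration space integrals against the graph cocycle representing $\theta$ directly.)

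Finally, fix the Shoikhet Maurer--Cartan element $m\in\GC_2^{or}$. Obstructions to building a loop-less universal formality morphism $\Tpoly'\to\Dpoly$ order by order lie in $H^1$ of its deformation complex, i.e.\ in $H^1(\GC_2^{or})=\K\theta$; but this is precisely the class that has been cancelled by passing from $\Tpoly$ to $\Tpoly'$ (the familiar mechanism whereby deforming the source along the obstruction class trivialises the obstruction), so every obstruction vanishes and the morphism exists. Because $\GC_2^{or}$, its differential, its bracket and the twisting equations are all defined over $\mathbb{Q}$, and at each stage a rational primitive of the now-exact obstruction cocycle can be chosen, the constructions -- of $\Tpoly'$ and of the morphism alike -- can be carried out entirely over $\mathbb{Q}$, with no appeal to configuration space integrals. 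Uniqueness up to homotopy follows since homotopy classes of such morphisms form a torsor over $H^0$ of the deformation complex, which is $H^0(\GC_2^{or})\cong H^0(\GC_1)=0$ -- in sharp contrast to the loop-allowed Kontsevich setting, where this group is $H^0(\GC_2)=\grt_1$.

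The step I expect to be the real obstacle is the first one: setting up the universal deformation theory so that it is \emph{exactly} $\GC_2^{or}$ up to an explicitly acyclic piece, with no spurious cohomology surviving in degrees $0$ and $1$ -- on the morphism side this requires the two-coloured oriented graph complex and a spectral-sequence comparison, which is exactly what the machinery of the paper is tailored for. Once that reduction is in place, all remaining input is supplied by Theorem~\ref{thm:main}.
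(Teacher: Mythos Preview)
Your overall strategy matches the paper's: reduce everything to the low-degree cohomology of $\GCo_2$ via Theorem~\ref{thm:main}, identify $H^0(\GCo_2)=0$ and $H^1(\GCo_2)\cong\K$, and read off uniqueness of the $L_\infty$ structure up to rescaling, uniqueness of the formality morphism up to homotopy, and unobstructedness of the morphism once the leading term of $m$ is tuned to kill the single $H^1$ class. The paper also uses the loop-number (first Betti number) grading to produce the one-parameter rescaling family $\Phi_\lambda$, which is cleaner than the vector-space rescaling you invoke, but this is cosmetic.

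There is, however, a genuine gap in your treatment of the \emph{rational} construction of the Maurer--Cartan element itself. You only analyse $H^{\le 1}(\GCo_2)$, and for existence you lean on Shoikhet's structure as a given nonzero point of the moduli space. But Shoikhet's construction uses configuration space integrals, so this does not support the claim ``without appeal to transcendental methods''. To build $m=m_4+m_5+\cdots$ inductively over $\mathbb{Q}$ you must show that the obstructions, which live in $H^2(\GCo_2)$, vanish at every step. The paper does this explicitly: by Theorem~\ref{thm:main} one has $H^2(\GCo_2)\cong H^2(\fGCc_1)\cong\K$, and the unique class is represented by a $4$-vertex graph (it corresponds to the $3$-wheel in $\fGCc_1$, cf.\ Remark~\ref{rem:wheelclasses}). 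Since $m_4$ is the Shoikhet cocycle with four vertices, the first bracket $[m_4,m_4]$ that could produce an obstruction already has seven vertices, and all later obstructions have more; hence none can hit the $4$-vertex class and the inductive construction goes through rationally. This vertex-count argument is the missing ingredient in your proposal. Similarly, your phrase ``the class that has been cancelled by passing to $\Tpoly'$'' should be made precise: one \emph{chooses} the scalar in front of $m_4$ so that the single $H^1$ obstruction to the formality morphism is cancelled, after which no further obstructions remain.
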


Note that this is in sharp contrast to the standard setting, where the obstruction theory, i.~e., both obstructions to and choices of formality morphisms, are subject to hard conjectures,\footnote{The Drinfeld-Kontsevich conjecture $H^1(\GC_2)\stackrel{?}{=}0$ for the obstructions and the Deligne-Drinfeld-Ihara conjecture $\grt_1\stackrel{?}{=}\widehat{\mathrm{FreeLie}}(\sigma_3, \sigma_5,\dots)$ for the choices.} and all known constructions of formality morphisms involve transcendental methods.

\subsection*{Structure of the paper}

In section \ref{sec:graphcomplexes} we recall the definitions of the relevant objects, the graph complexes $\GC_n^{or}$ and $\GC_n$, and related operads that will appear in the proof of Theorem \ref{thm:main} in section \ref{sec:theproof}.
The application to 
infinite dimensional deformation quantization will be discussed in more detail in section 
\ref{sec:looplessdefq}.


\subsection*{Acknowledgements}
I am grateful to S. Merkulov for many discussions. It was also him who suggested the problem. The author was partially supported by the Swiss National Science foundation, grant PDAMP2\_137151.

\section{Notation and basic definitions}
We fix a ground field $\K$ of characteristic zero. We abbreviate the phrase ``differential graded'' by dg as usual.

We will use freely the language of operads. A good introduction can be found in the book \cite{lodayval}.
In particular, we will use the operads $\e_n$, $n\in \Z$. An algebra over the operad $\e_n$ is a graded vector space together with an associative commutative product $\wedge$ and a Lie bracket operation $[,]$ of degree $1-n$ such that for all homogeneous $x,y,z$ in the vector space
\[
 [x,y\wedge z]=[x,y]\wedge z + (-1)^{(|x|+1-n)|y|} y\wedge [x,z].
\]
One can check in particular that $\dimens \e_n(N)=N!$.
In the literature these operads are sometines denoted by $P_n$ since $\e_1$ is the Poisson operad.
They are Koszul, with Koszul dual $\e_n^\vee = \e_n^*\{n\}$.
We call the minimal resolution $\hoe_n:= \Omega(e_n^\vee)$, where $\Omega(\dots)$ denotes the operadic cobar construction.
There is a canonical map $\hoe_n\to e_n$.
We will denote by $\hoLie_n:=\Omega((\Lie\{n-1\})^\vee)$ the minimal resolution of the degree shifted Lie operad. There is an inclusion $\hoLie_n\to \hoe_n$.

We also need the operads $\te_n$, which are quotients of $\hoe_n$. An $\te_n$ algebra is a $\hoe_n$ algebra whose only non-vanishing $\hoe_n$ operations are (i) the commutative product operation and (ii) the $\hoLie_n$ operations.
In other words, a $\te_n$ algebra is a commutative algebra with a degree shifted $L_\infty$ structure, such that the $L_\infty$ operations are derivations in each slot.

\begin{rem}
 One way the operad $\te_2$ arises in practice is the following.
Let $\alg g$ be a Lie bialgebra. Then the co-Chevalley complex $S\alg g[-1]$ is naturally an $\e_2$ algebra. Here the Lie algebra structure determines the bracket and the Lie coalgebra structure determines the differential.

Similar, if $\alg g$ is an $\infty$-Lie bialgebra, then $S\alg g[-1]$ is naturally a $\hoe_2$ algebra. In fact, in this case the action of $\hoe_2$ factorizes as follows:
\[
 \hoe_2 \to \te_2 \to \End(S\alg g[-1]).
\]
\end{rem}

Below we will need the following result.
\begin{prop}\label{prop:tenenqiso}
 The map $\te_n\to \e_n$ is a quasi-isomorphism.
\end{prop}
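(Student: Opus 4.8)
\emph{Proof idea.} The plan is to compute the homology of $\te_n$ directly and identify it with $\e_n$. First I would use that the map in question factors as $\hoe_n \to \te_n \to \e_n$ and that the canonical map $\hoe_n \to \e_n$ is a quasi-isomorphism (the operad $\e_n$ is Koszul, and $\hoe_n = \Omega(\e_n^\vee)$ is its cobar resolution); by the two-out-of-three property it then suffices to show that $\te_n \to \e_n$ is a quasi-isomorphism.

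The key structural input is the distributive-law presentation shared by both sides. Classically, as a symmetric sequence, $\e_n \cong \Com \circ \Lie\{n-1\}$ --- the Poincar\'e--Birkhoff--Witt isomorphism for the distributive law $\Lie\{n-1\} \circ \Com \to \Com \circ \Lie\{n-1\}$ encoding ``the bracket is a biderivation of the product''. By construction $\te_n$ is precisely the analogous quotient of $\hoe_n$ in which $\Lie\{n-1\}$ is replaced by its resolution $\hoLie_n$: a $\te_n$-algebra is a commutative algebra equipped with an $\hoLie_n$-structure whose generating brackets $\ell_2,\ell_3,\dots$ act as multiderivations of the product. So I would first verify that this lifted distributive law $\hoLie_n \circ \Com \to \Com \circ \hoLie_n$ is again of PBW type, i.e. that $\te_n \cong \Com \circ \hoLie_n$ as a symmetric sequence, compatibly with the quotient $\hoe_n \twoheadrightarrow \te_n$ and with $\te_n \to \e_n$ --- the latter becoming $\mathrm{id}_{\Com} \circ p$ for the canonical quasi-isomorphism $p\colon \hoLie_n \to \Lie\{n-1\}$. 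This one can do either by checking the Markl-type compatibility conditions for the distributive law on generators, or by observing that the classical $\e_n$ distributive law deforms along the quasi-free filtration of $\hoLie_n$ without obstruction.

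Granting this, the computation is immediate. Under the identification $\te_n \cong \Com \circ \hoLie_n$ the differential is ``fibrewise'': it acts only through the internal differentials of the $\hoLie_n$-factors (the differential of a generator $\ell_k$ is a sum of operadic compositions of two lower brackets, and never creates a product $\mu$), so it respects the plethysm decomposition $\Com \circ (-)$. Over a field of characteristic zero the composition product of symmetric sequences is exact, so
\[
 H(\te_n) \;=\; H(\Com \circ \hoLie_n) \;=\; \Com \circ H(\hoLie_n) \;=\; \Com \circ \Lie\{n-1\} \;=\; \e_n ,
\]
and this isomorphism is exactly the one induced by $\te_n \to \e_n$. (Equivalently, filter $\te_n$ by the number of $\Com$-slots; the observation above shows that the first page of the associated spectral sequence already equals $\e_n$.) Together with the first paragraph this gives the claim, and also shows $\hoe_n \to \te_n$ is a quasi-isomorphism.

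I expect the only genuinely non-formal step to be the PBW statement for the lifted distributive law --- that passing to the quotient $\te_n$ does not collapse the underlying symmetric sequence below $\Com \circ \hoLie_n$. Everything else is the standard homological algebra of operads in characteristic zero.
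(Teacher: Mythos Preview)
Your approach is essentially the same as the paper's: identify $\te_n \cong \Com \circ \hoLie_n$ as an $\mathbb{S}$-module, then use that the differential lives entirely in the $\hoLie_n$ factor to conclude $H(\te_n)\cong \Com\circ \Lie\{n-1\}\cong \e_n$. The paper compresses this into a single sentence, whereas you spell out the distributive-law/PBW justification and the exactness argument, but the content is identical.

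One small remark: your opening paragraph is oddly phrased --- saying ``by two-out-of-three it suffices to show that $\te_n\to\e_n$ is a quasi-isomorphism'' is a tautology, since that is the statement itself. What you presumably meant (and what the closing sentence confirms) is that once $\te_n\to\e_n$ is established directly, two-out-of-three gives $\hoe_n\to\te_n$ as a corollary. This is harmless, since your actual argument does not depend on that paragraph.
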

\begin{proof}
As a $\mathbb{S}$-module $\te_n\cong \Com\circ \hoLie_n$, hence the cohomology is $\Com\circ \Lie\{n-1\}\cong \e_n$.
\end{proof}

\begin{rem}
 For a quasi-Lie bialgebra or $\infty$-quasi-Lie bialgebra, the Chevalley complex is in general not a $\te_2$ algebra, only a ``non-flat'' $\te_2$ algebra, meaning that the Chevalley differential does not square to zero. Such a non-flat $\te_2$ algebra may be seen as an algebra (with zero differential) over an operad $\te_2^{nonflat}$ defined in a similar way as $\te_2$, except that one allows in addition zero- and unary operations $\mu_0, \mu_1$. The operations $\mu_0,\mu_1, \mu_2,\dots$ then generate a suboperad isomorphic (up to a degree shift) to the operad governing non-flat $L_\infty$ algebras.
There is a map $\te^{nonflat}_2\to \te_2$ sending $\mu_0$ and $\mu_1$ to zero. However, note that $\te_2^{nonflat}$ is acyclic.
\end{rem}

For $\op P$ an operad, we may consider the dg Lie algebra of derivations $\Der(\op P)$ of this operad. Similarly, for $\op P \to \op Q$ an operad map, we may consider the deformation complex $\Def(\op P \to \op Q)$. In fact, in the cases we encounter, we may always assume that $\op P$ has the form $\op P=\Omega(\op C)$ for a coaugmented cooperad $\op C$. For the precise definition of $\Der(dots)$ and $\Def(dots)$ we refer the reader to \cite[section 2]{grt}, whose conventions we follow.

We will also make use of the following small result, contained in loc. cit.
\begin{lemma}[Lemma 2.2 of \cite{grt}]
\label{lem:defpresqiso}
Let $\op C$ be a coaugmented cooperad, let $\op P$ and $\op P'$ be operads, and let 
\[
\Omega(\op C) \to \op P \to \op P'
\]
be operad maps, with the right hand arrow being a quasi-isomorphism. Then the induced map of differential graded Lie algebras
\[
\Def(\Omega(\op C)\to \op P) \to \Def(\Omega(\op C)\to \op P')
\]
is a quasi-isomorphism.
\end{lemma}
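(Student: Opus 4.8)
The plan is to realise both deformation complexes as filtered complexes, in such a way that the map of the Lemma preserves the filtrations and induces an evident quasi-isomorphism on the associated graded, and then to conclude by a convergence theorem.

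Recall that, in the conventions of \cite[\S 2]{grt}, the deformation complex $\Def(\Omega(\op C)\to\op P)$ has as underlying graded vector space $\prod_{n\ge 1}\Hom_{\mathbb{S}_n}(\widebar{\op C}(n),\op P(n))$ (up to a degree shift), with $\widebar{\op C}$ the coaugmentation coideal of $\op C$, and that its differential is the sum of three pieces: post-composition with $d_{\op P}$, pre-composition with $\pm d_{\op C}$, and the convolution bracket $[m_{\op P},-]$ with the Maurer--Cartan element $m_{\op P}$ encoding the map $\Omega(\op C)\to\op P$, the convolution bracket being assembled from the decomposition of $\op C$ and the operadic composition of $\op P$. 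The map of the Lemma is post-composition with $\op P(n)\to\op P'(n)$; it is a chain map precisely because $m_{\op P'}$ is the image of $m_{\op P}$ under this map.

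First I would filter by arity, setting $F^p=\prod_{n\ge p}\Hom_{\mathbb{S}_n}(\widebar{\op C}(n),\op P(n))$. Since operadic composition in $\op P$ and cooperadic decomposition in $\op C$ never lower arity, while the internal differentials preserve it, the differential preserves $F^\bullet$, and the comparison map is plainly filtration-preserving. As $\Def$ is the product of its arity components, this filtration is exhaustive, Hausdorff and complete. On the associated graded in arity $n$ the only surviving term of the differential is the arity-preserving one, $d_{\op P}\circ(-)\pm(-)\circ d_{\op C}+[m_{\op P}^{(1)},-]$, where $m_{\op P}^{(1)}\colon\widebar{\op C}(1)\to\op P(1)$ is the arity-$1$ component of $m_{\op P}$; in particular, when $\widebar{\op C}(1)=0$, which holds for every cooperad occurring in this paper, this is simply the $\mathbb{S}_n$-equivariant Hom-complex differential.

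Next I would check that $\mathrm{gr}^n$ of the comparison map is a quasi-isomorphism. Ignoring the twist by $m_{\op P}^{(1)}$, this map is obtained by applying $\Hom_{\mathbb{S}_n}(\widebar{\op C}(n),-)$ to the quasi-isomorphism $\op P(n)\xrightarrow{\sim}\op P'(n)$ of dg $\K[\mathbb{S}_n]$-modules; since $\K[\mathbb{S}_n]$ is semisimple, $\op P(n)\to\op P'(n)$ is in fact a chain homotopy equivalence of $\K[\mathbb{S}_n]$-modules, and the dg functor $\Hom_{\mathbb{S}_n}(\widebar{\op C}(n),-)=\bigl(\Hom_\K(\widebar{\op C}(n),-)\bigr)^{\mathbb{S}_n}$ takes chain homotopy equivalences to chain homotopy equivalences, hence quasi-isomorphisms. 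The twist by $m_{\op P}^{(1)}$, where it is present, is a perturbation of the differential that is absorbed by one more spectral-sequence comparison, or directly by the basic perturbation lemma. Finally, the comparison map is a morphism of complete, exhaustive, Hausdorff filtered complexes inducing an isomorphism on the $E_1$-pages of the associated spectral sequences, whence, by the standard complete-convergence theorem for such spectral sequences, it is an isomorphism on cohomology. The one step I expect to need genuine care is this last descent: since $\Def$ is a product over infinitely many arities rather than a complex with a bounded filtration, one must actually verify the completeness and exhaustiveness hypotheses guaranteeing convergence rather than invoking the elementary bounded comparison theorem; everything else is formal.
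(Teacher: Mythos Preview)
The paper does not give its own proof of this lemma; it is quoted verbatim as Lemma~2.2 of \cite{grt} and used as a black box in the proof of Theorem~\ref{thm:main}. There is therefore nothing in the present paper to compare your argument against.

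That said, your argument is the standard one and is correct. A few remarks: the arity filtration is preserved because in the convolution bracket $[m_{\op P},f]$ the arity-$n$ component only sees $f_k$ for $k\le n$, so on the associated graded in arity $n$ only the arity-$1$ part $m_{\op P}^{(1)}$ survives, exactly as you state. The step where you upgrade the quasi-isomorphism $\op P(n)\to\op P'(n)$ to an $\mathbb{S}_n$-equivariant homotopy equivalence is correctly justified by semisimplicity of $\K[\mathbb{S}_n]$ in characteristic zero (every acyclic complex of $\K[\mathbb{S}_n]$-modules is contractible, so the cone is contractible), and $\Hom_{\mathbb{S}_n}(\widebar{\op C}(n),-)$ then preserves it. Your caution about the last step is well placed: the filtration is genuinely unbounded above, so one does need the Eilenberg--Moore comparison for complete exhaustive Hausdorff filtrations rather than the elementary bounded comparison, but the hypotheses are immediate from the product description of $\Def$, as you note.
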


\section{Graph complexes and graph operads}\label{sec:graphcomplexes}
\subsection{The ordinary commutative graph complex}
We briefly recall the construction of the commutative graph complexes, for more details the reader is referred to \cite{grt}.
Let $\gra_{N,k}$ be the set of directed graphs with vertex set $[n]=\{1,\dots, n\}$ and edge set $[k]$. It carries an action of the group $G_{N,k}:=S_N\times (S_2^k \ltimes S_k)$ by renumbering the vertices, renumbering the edges and flipping the directions of edges.
As a graded vector space one may define the full graph complex to be
\begin{equation}\label{equ:fGCndef}
 \fGC_n 
=
\prod_{N\geq 1}
\prod_{k\geq 0}
\left(
\K\langle \gra_{N,k} \rangle \otimes \K[-n]^{\otimes N} \otimes \K[n-1]^{\otimes k} \otimes \sgn^{\otimes k}
\right)_{G_{N,k}}[n].
\end{equation}
Here $G_n$ acts diagonally on the vector space spanned by graphs $\K\langle \gra_{N,k} \rangle$ and on the one dimensional tensor products on the right by permutations, i.~e., by appropriate signs. There is a pre-Lie algebra structure $\bullet$ on $\fGC_n$ such that for graphs $\gamma, \nu$ the pre-Lie product $\gamma\bullet\nu$ is the sum over all possible insertions of $\nu$ at vertices of $\gamma$.
One checks that the element 
\[
m=
\begin{tikzpicture}[baseline=-.65ex]
 \node[int] (v) at (0,0) {};
\node[int] (w) at (0.5,0) {};
\draw (v) edge (w);
\end{tikzpicture}
\]
is a Maurer-Cartan element, i.~e., $[m,m]=0$. The differential on the graph complex $\fGC_n$ is the bracket with $m$, $\delta=[m,\cdot]$. Combinatorially, this differential is given by vertex splitting.

There are two important subcomplexes $\GC_n \subset \fGCc_n\subset \fGC_n$. Here $\fGCc_n$ consists of series of connected graphs, and $\GC_n$ consists of series in connected graphs all of whose vertices are at least trivalent.
The cohomology of $\fGC_n$ is clearly just the symmetric product of that of $\fGCc_n$. Furthermore, the latter cohomology is the sum of of $\GC_n$ and the wheel classes, which are represented by graphs which are loops of bivalent vertices.

\begin{rem}
 The author apologizes for the clumsy notation like $\fGCc_n$. The $\GC$ shall stand for graph complex, the $\mathrm{f}$ for full, i.~e., allowing all valences of vertices, and $\mathrm{c}$ for connected, i.~e., graphs are only allowed to have one connected component. Later we will also see a $\mathrm{d}$, which shall indicate that the edges are directed.
\end{rem}

\begin{rem}
 In this paper we take the approach that all graphs are allowed to contain short cycles a priori, i.~e., edges connecting some vertex to itself.
This is not consistent with the notation used elsewhere. However, we don't want to spoil the already complicated notation further by putting ${}^\circlearrowleft$ superscripts everywhere. 
\end{rem}

\subsection{The oriented graph complex}
 There is also a directed graph complex $\dfGC_n$, built like $\fGC_n$ except that one retains the direction of edges, i.~e., one does not mod out by $S_2^k$ in formula \eqref{equ:fGCndef} above. One can however check (see Appendix K in \cite{grt}) that both complexes $\dfGC_n$ and $\fGC_n$ are quasi-isomorphic, so nothing new is created.
However, the directed graph complex has an interesting subcomplex
\[
 \fGC_n^{or} \subset \dfGC_n
\]
whose elements are series in graphs that do not possess directed cycles. 
Similarly to the above discussion, we may also identify subcomplexes 
\[
 \GC_n^{or}\subset \fGCc_n^{or} \subset \fGC_n^{or}
\]
where $\fGCc_n^{or}$ consists of series of \emph{connected} graphs without oriented cycles, and $\GC_n^{or}$ furthermore contains only graphs all of whose vertices have valence at least 2.

\begin{ex}\label{ex:shoiobstruction}
 The non-trivial class in $\GC_n^{or}$ ($n$ even) with the fewest vertices is represented by the following cocyle, found by B. Shoikhet \cite{shoikhetlinfty}, and implicitly by Penkava and Vanhaecke \cite{penkavavanhaecke}.
\[
\begin{tikzpicture}[baseline={(current bounding box.center)},every edge/.style={draw, -triangle 60}]
\node[int] (v1) at (-1,1.5) {};
\node[int] (v2) at (-1,-0.5) {};
\node[int] (v4) at (-2,0.5) {};
\node[int] (v3) at (0,0.5) {};
\draw  (v1) edge (v2);
\draw  (v3) edge (v2);
\draw  (v3) edge (v1);
\draw  (v4) edge (v1);
\draw  (v4) edge (v2);
\end{tikzpicture}
\pm
\begin{tikzpicture}[baseline={(current bounding box.center)},every edge/.style={draw, -triangle 60}]
\node[int] (v1) at (-1,1.5) {};
\node[int] (v2) at (-1,-0.5) {};
\node[int] (v4) at (-2,0.5) {};
\node[int] (v3) at (0,0.5) {};
\draw  (v1) edge (v2);
\draw  (v2) edge (v3);
\draw  (v1) edge (v3);
\draw  (v1) edge (v4);
\draw  (v2) edge (v4);
\end{tikzpicture}
\pm 2 \,
\begin{tikzpicture}[baseline={(current bounding box.center)},every edge/.style={draw, -triangle 60}]
\node[int] (v1) at (-1,1.5) {};
\node[int] (v2) at (-1,-0.5) {};
\node[int] (v4) at (-2,0.5) {};
\node[int] (v3) at (0,0.5) {};
\draw  (v1) edge (v2);
\draw  (v1) edge (v3);
\draw  (v2) edge (v3);
\draw  (v4) edge (v1);
\draw  (v4) edge (v2);
\end{tikzpicture}
\]
\end{ex}

\begin{rem}
 Degree 0 cocycles in the oriented graph complex $\GCo_3$ act naturally on the space of Lie bialgebra structures on any (possibly infinite dimensional) vector space.
More generally, there is a map from $\GCo_3$ to the (bi-)Chevalley complex of any Lie bialgebra.
\end{rem}

%

\subsection{Another (equivalent) version of the oriented graph complex}
\label{sec:beardGC}
We will in fact encounter one more version of the oriented graph complex in the proof of Theorem \ref{thm:main} below. Define $\hGCor_n$ in the same manner as $\fGCco_n$, except that the class of graphs one considers is larger. One allows the graphs to have any number of outgoing ``external legs'', and one sets graphs to zero if they contain vertices without outgoing edges. The external legs do not alter the degree of the graph, in other words they are considered to carry degree 0.

\begin{ex}
Here are graphs cochains in (left to right) $\hGCor_3$ and $\GCor_3$:
\[
\begin{tikzpicture}[baseline=-0.65ex]
 \node[int] (v2) at (0,0) {};
 \node[int] (v1) at (0,1) {};
 \draw[-triangle 60] (v1) edge[bend left] (v2) edge[bend right] (v2);
 \node (w1) at (-.7,-1) {};
\node (w2) at (-.3,-1) {};
\node (w3) at (.3,-1) {};
\node (w4) at (.7,-1) {};
\draw[-triangle 60] (v2) edge (w1) edge (w2) edge (w3) edge (w4);
\end{tikzpicture}
\begin{tikzpicture}[baseline=-0.65ex]
 \node[int] (v2) at (0,0) {};
 \node[int] (v1) at (0,1) {};
 \draw[-triangle 60] (v1) edge[bend left] (v2) edge[bend right] (v2);
\end{tikzpicture}
\]
The right hand graph is zero as an element of $\hGCor_3$ because it has a vertex without outgoing edges.
\end{ex}

%
%
%

The differential on $\hGCor_n$ is pictorially defined as follows.
\begin{equation}\label{equ:hGCodifferential}
 \delta \Gamma = \sum_\nu \Gamma \bullet_\nu
\begin{tikzpicture}[baseline=-.65ex]
 \node[int] (v) at (0,.3) {};
\node[int] (w) at (0,-.3) {};
\draw[-triangle 60] (v) edge (w);
\end{tikzpicture}
\pm
\sum_{j\geq 1}
\frac 1 {j!}
\begin{tikzpicture}[baseline=-.65ex]
 \node[int] (v) at (0,.3) {};
\node[ext] (w) at (0,-.3) {$\Gamma$};
\coordinate (w1) at (+.3,-.1);
\coordinate (w2) at (-.3,-.1);
\draw[-triangle 60] (v) edge (w) edge (w1) edge (w2);
\end{tikzpicture}
\pm
\sum_{j\geq 1}
\frac 1 {j!}
\begin{tikzpicture}[baseline=-.65ex]
 \node[ext] (v) at (0,.3) {$\Gamma$};
\node[int] (w) at (0,-.3) {};
\coordinate (w1) at (+.3,-.5);
\coordinate (w2) at (-.3,-.5);
\coordinate (w3) at (0,-.7);
\draw[-triangle 60] (v) edge (w) (w) edge (w1) edge (w2) edge (w3);
\end{tikzpicture}
\end{equation}
Here the first sum runs over vertices of $\Gamma$, and the symbol $\bullet_\nu$ shall mean that the graph on the right is inserted at vertex $\nu$. In the second term the black vertex has valence $j$, and one sums in addition over all ways to connect the edge towards $\Gamma$ to vertices of $\Gamma$. In the last term one sums over all external legs of $\Gamma$ and connects one to the new vertex.

There is a map of complexes
\[
 \GCo_n \to \hGCor_n
\]
pictorially defined as follows
\begin{equation}\label{equ:hairymap}
\Gamma \mapsto 
\sum_{j=1}^\infty
\frac 1 {j!}
\underbrace{
 \begin{tikzpicture}[baseline=-.65ex]
  \node (v) at (0,0) {$\Gamma$};
  \coordinate (v0) at (-.7,-1);
  \coordinate (v1) at (-.3,-1);
  \coordinate (v2) at (.3,-1);
  \coordinate (v3) at (.7,-1);
  \node (w0) at (-.7,1) {};
  \node (w1) at (-.3,1) {};
  \node (w2) at (.3,1) {};
  \node (w3) at (.7,1) {};
  \draw[-triangle 60] (v) edge (v0) edge (v1) edge (v2) edge (v3);  
  \%draw[-triangle 60] (w0) edge (v) (w1) edge (v) (w2) edge (v) (w3) edge (v);  
 \end{tikzpicture}
 }_{j\times}
\end{equation}
where the picture on the right means that one should sum over all ways of connecting $j$ outgoing edges to the graph $\Gamma$. Graphs for which there remain vertices with no outgoing edge are identified with $0$.

\begin{prop}\label{prop:GChGC}
 The map $\GCo_n \to \hGCor_n$ is a quasi-isomorphism up to the class in $H(\hGCor_n)$ represented by the graph cocycle
\begin{equation}
\label{equ:singleclass}
 \sum_{j\geq 2} \frac{j-1}{j!}
 \underbrace{\begin{tikzpicture}[baseline=-2.5ex, scale=.7]
  \node[int] (v) at (0,0) {};
 \coordinate (v0) at (-.7,-1);
  \coordinate (v1) at (-.3,-1);
  \coordinate (v2) at (.3,-1);
  \coordinate (v3) at (.7,-1);
  \draw[-triangle 60] (v) edge (v0) edge (v1) edge (v2) edge (v3);  
 \end{tikzpicture}
 }_{j\times}.
\end{equation}
\end{prop}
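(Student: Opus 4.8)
The plan is to compute $H(\hGCor_n)$ directly and then read off the map \eqref{equ:hairymap} on cohomology. Observe first that each of the three terms of the differential \eqref{equ:hGCodifferential} adds exactly one vertex and one internal (i.e. non-external-leg) edge, and therefore preserves the first Betti number; so $\hGCor_n$, the complex $\GCor_n$, and the map \eqref{equ:hairymap} all split as products over the loop order $g$, and one works at fixed $g$. Inside a fixed $g$ the number of vertices of a graph is determined by its degree, so the only unbounded direction is the number of external legs. Filter $\hGCor_n$ by that number: all three terms of $\delta$ weakly increase it, so the subspaces of series supported on graphs with at least $p$ external legs form a complete descending filtration, and the associated spectral sequence converges by a completeness argument of the kind used in the appendices of \cite{grt}.

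On the associated graded the induced differential $\delta_0$ keeps only vertex splitting together with the leg-number-preserving parts of the last two terms of \eqref{equ:hGCodifferential} -- attaching a univalent source at a vertex, and subdividing an external leg by a bivalent vertex. The key step is to compute the cohomology of this associated graded complex: one wants to show that the ``decorations'' of a graph -- chains of bivalent vertices sitting on external legs, and the trees of sources one can grow inward from a vertex -- form auxiliary subcomplexes that are acyclic, so that up to homotopy every external leg may be assumed to sit directly on an honest vertex and no spurious sources are present. The expected mechanism is a contracting homotopy erasing the outermost bivalent vertex of an antenna (respectively the outermost source), though keeping the signs of \eqref{equ:hGCodifferential} straight through this is where the work lies. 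What survives is then the complex of graphs with external legs attached directly to their vertices, equipped with the residual vertex-splitting differential, together with a separate loop-order-zero sector -- separate because $\GCor_n$ has no connected loop-order-zero graphs at all (these would be trees, hence would have a univalent vertex). On the subsequent pages the leg-creating operators act; comparing them with \eqref{equ:hairymap} should show that for $g\ge 1$ the cohomology equals $H(\GCor_n)$ realized via \eqref{equ:hairymap}, while for $g=0$ it is governed by a small complex with binomial-coefficient structure constants whose cohomology, by a short generating-function computation, is one-dimensional and spanned precisely by \eqref{equ:singleclass}.

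It then remains to confirm that \eqref{equ:singleclass} is closed -- a direct check with the signs of \eqref{equ:hGCodifferential} -- and that it is not in the image of \eqref{equ:hairymap}, which is immediate since every summand of \eqref{equ:hairymap} is a graph with at least two vertices whereas \eqref{equ:singleclass} has one-vertex summands. The main obstacle is the cohomology computation for the decoration complexes: showing that attaching external legs, bivalent antennae and inward source-trees in all admissible ways changes nothing in cohomology except for the single extra class, with the three superimposed operations (subdivision, source-growing, vertex-splitting) and their signs kept properly disentangled; the convergence of the external-leg spectral sequence over series of graphs with unboundedly many legs is the secondary technical point. As an operadic alternative one might try to recognize $\hGCor_n$, away from its loop-order-zero part, as a deformation complex $\Def(\Omega(\op C)\to\op P)$ and invoke Lemma \ref{lem:defpresqiso}; but the loop-order-zero graphs, which have no counterpart in $\GCor_n$, would still have to be analyzed separately.
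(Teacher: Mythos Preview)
Your strategy is sound in outline, and your observations about the extra class --- that it is closed, sits in loop order zero, and cannot lie in the image of \eqref{equ:hairymap} because the latter has only summands with at least two vertices --- agree with the paper. But the paper's filtration is different and more effective. Rather than filtering by the number of external legs, it defines the \emph{antennas} of a graph (maximal strings of bivalent vertices terminating in an external leg or a univalent vertex) and filters by the number of \emph{core} (non-antenna) vertices. On the associated graded the core is frozen; for each fixed core the complex of antenna decorations factors as $(\bigotimes_j C_j)^G$ over core vertices, and each factor $C_j$ is built out of symmetric powers of a single elementary ``one-antenna'' complex $A$, which is checked to be acyclic by hand. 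A five-case analysis (according to the valence of the core vertex and whether it has an outgoing core edge) shows that exactly the cores with all vertices of valence $\geq 2$ survive, one class each, while the empty-core case is handled separately and produces precisely \eqref{equ:singleclass}. Crucially, the map \eqref{equ:hairymap} respects this filtration in the cleanest possible way: a graph $\Gamma\in\GCo_n$ with $k$ vertices is sent to graphs with exactly $k$ core vertices, so the final spectral-sequence comparison is immediate.

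Your leg-count filtration lacks both of these features. On your $E_1$ the full vertex-splitting differential is still present, so the ``decoration complexes'' you must show acyclic are not small auxiliary gadgets but essentially hairy oriented graph complexes with a prescribed number of hairs; and the map \eqref{equ:hairymap} spreads a single $\Gamma$ across all leg numbers $j\geq s(\Gamma)$ (with $s(\Gamma)$ the number of sink vertices), so there is no clean filtered comparison to run. To carry your programme through you would in practice need a second, inner filtration separating decoration vertices from the rest --- which is exactly the core/antenna filtration, now nested inside an unnecessary outer layer. In short, the two places you flag as ``where the work lies'' and ``should show'' are the entire content of the proof, not residual bookkeeping; the paper's choice of filtration is what makes those computations tractable.
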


\begin{proof}
(Sketch) For a graph $\Gamma$ we define an \emph{antenna} as an external leg or univalent vertex, together with a maximal adjacent string of bivalent vertices, see Figure \ref{fig:antennas} for an illustration. We call the non-antenna part of the graph the \emph{core}. Note that the core can be empty, and that happens iff the graph is a string of $\leq 2$-valent vertices.
We define a descending complete filtration on $\hGCor_n$ by the number of non-antenna vertices in graphs. The differential on the associated graded, say $\delta'$, sees only the terms that leave the number of core vertices the same.
In \eqref{equ:hGCodifferential} above, the first term, the second term for $j=1,2$ and the third term for $j=1$ can contribute.
Let us compute the cohomology. The differential $\delta'$ leaves the core intact, so the complex splits as a product of subcomplexes, one for each core.
We treat two cases:

(A) The core is empty. In this case the graph is a string of $\leq 2$-valent vertices. It must have one of the following forms:
\begin{align*}
C_{i,j}&= 
\begin{tikzpicture}[baseline=-.65ex]
  \node[int] (v1) at (0,0) {};
  \node (v2) at (1,0) {$\cdots$};
\node[int] (v3) at (2,0) {};
\node (v4) at (3,0) {$\cdots$};
\node[int] (v5) at (4,0) {};
\coordinate (w1) at (-1,0);
\coordinate (w2) at (5,0);
\draw[-triangle 60] (v1) edge (w1) (v2) edge (v1) (v3) edge (v2) edge (v4) (v4) edge (v5) (v5) edge (w2);
 \end{tikzpicture}
=\pm C_{j,i}
\\
D_i &=
\begin{tikzpicture}[baseline=-.65ex]
  \node[int] (v1) at (0,0) {};
  \node (v2) at (1,0) {$\cdots$};
\node[int] (v3) at (2,0) {};
\coordinate (w2) at (3,0);
\draw[-triangle 60] (v1) edge (v2) (v2) edge (v3) (v3) edge (w2);
 \end{tikzpicture}
\end{align*}
Here the subscripts $i$ and $j$ refer to the lengths of the directed substrings, counted in edges.
One checks that the differential has the following form:
\begin{align*}
 \delta' C_{i,j} &=  \pm \epsilon_i C_{i+1,j} \pm C_{i,j+1} \\
 \delta' D_{i} &=  \pm \epsilon_{i+1} D_{i+1} \pm C_{1,i+1} 
\end{align*}
where $\epsilon_{i}$ is $0$ for $i$ odd and $1$ for $i$ even.
One easily checks that the cohomology of the resulting complex is one-dimensional, the cohomology class being represented by
\[
 C_{1,1} = 
\begin{tikzpicture}[baseline=-.65ex]
  \node[int] (v1) at (0,0) {};
\coordinate (w2) at (1,0);
\coordinate (w1) at (-1,0);
\draw[-triangle 60] (v1) edge (w2) edge (w1);
 \end{tikzpicture}
\]

This class is the image of the cocycle \eqref{equ:singleclass}. 
Note also that the cocycle \eqref{equ:singleclass} cannot be exact since it consists of graphs with one vertex only.

(B) The core is not empty. In this case the subcomplex corresponding to the core has the form $(\otimes_j C_j)^G$, where $j$ runs over the vertices in the core, $G$ is the symmetry group of the core and $C_j$ is a complex that models the antennas that can be attached at vertex $j$. In fact the complexes $C_j$ can be of one of five different forms, depending on whether $j$ is zero- one- or $\geq 2$-valent in the core and on whether or not it has an outgoing edge in the core.
One may compute the cohomology of each of these complexes.
Let us introduce an auxiliary complex $(A,d)$ which models a single antenna. It has elements $F_{i,j}$, $i\geq 1, j\geq 0$ and $G_i$, $i\geq 1$. They stand for the following antennas:
\begin{align*}
F_{i,j}&= 
\begin{tikzpicture}[baseline=-.65ex]
  \node[int] (v1) at (0,0) {};
  \node (v2) at (1,0) {$\cdots$};
\node[int] (v3) at (2,0) {};
\node (v4) at (3,0) {$\cdots$};
\node[int] (v5) at (4,0) {};
\coordinate (w1) at (-1,0);
\node[ext] (w2) at (5,0) {core};
\draw[-triangle 60] (v1) edge (w1) (v2) edge (v1) (v3) edge (v2) edge (v4) (v4) edge (v5) (v5) edge (w2);
 \end{tikzpicture}
\\
G_i &=
\begin{tikzpicture}[baseline=-.65ex]
  \node[int] (v1) at (0,0) {};
  \node (v2) at (1,0) {$\cdots$};
\node[int] (v3) at (2,0) {};
\node[ext] (w2) at (3,0) {core};
\draw[-triangle 60] (v1) edge (v2) (v2) edge (v3) (v3) edge (w2);
 \end{tikzpicture}
\end{align*}
where $i,j$ are the length of the directed strings, in edges.
The differential is 
\begin{align*}
 d F_{i,j} &=  \pm \epsilon_i F_{i+1,j} \pm  F_{i,j+1} \\
 d G_{i} &=  \pm \epsilon_{i+1} G_{i+1} \pm  F_{1,i+1} 
\end{align*}
One checks easily that the complex $A$ is acyclic, i.~e., $H(A,d)=0$. 
Now consider one of the five cases above, an at least bivalent core vertex with outgoing edge in the core. Then $C_j\cong (S(A), \delta'=d+d')$ where $S(\dots)$ denotes the completed symmetric product, $d$ is the differential induced by $d$ on $A$ and $d'$ is multiplication by $F_{1,1}$.
Using that $H(A,d)=0$ one can see that there is a single cohomology class represented by $1+\cdots \in S(A)$.
Here the ``1'' corresponds to the no-antenna configuration.

Next consider an at least bivalent core vertex $j$, with no outgoing edge in the core. Then the complex $C_j$ is a quotient of $(S^+(A), \delta'=d+d')$, obtained by modding out all monomials that do not contain at least one $F_{i,0}$, $i\geq 1$, where $S^+(A)$ denotes the completed symmetric product, without constant term.
Since $S^+(A)$ is acyclic (as one may see using acyclicity of $A$), we can alternatively compute the cohomology of the subspace $S^+(A')$, where $A'\subset A$ is obtained by removing the $F_{i,0}$. To compute $H(A')$ we might as well compute $H(A/A')$, which is easily checked to be one-dimensional, the cohomology class represented by $F_{1,0}$.
Hence one finds $H(A')$ is one-dimensional, a representative of the cohomology class is $F_{1,1}$, living in degree 1. Hence $H(S^+(A'), d)\cong S^+(F_{1,1} \K)\cong F_{1,1}\K$ is one dimensional.
By a standard spectral sequence argument, so is $H(C_j)$.

Next, consider the case of vertex $j$ one-valent in the core, with the incident edge outgoing.
The complex we need to consider is $(S^{\geq 2}(A), \delta'=d+d')$, where $S^{\geq 2}(A)$ is the completed symmetric product without constant and linear term.
Since $A$ is acyclic $H(C_j)=0$. 

The case of one-valent vertex $j$ with incoming edge in the core is handled similarly to the above discussion, and one finds that $H(C_j)=0$ as well, since $H(S^{\geq 2}(A'), d)\cong S^{\geq 2}(F_{1,1} \K)=0$.

Finally consider $j$ zero-valent in the core. Then $C_j$ is a quotient of $S^{\geq 3}(A)$, which can be seen to be acyclic since $H(S^{\geq 3}(A'), d)\cong S^{\geq 3}(F_{1,1} \K)=0$.

To summarize, only graphs contribute for which all core vertices are at least two-valent, and each such core contributes one or no class, depending on whether the core has odd symmetries. One checks that the contributing (isomorphism classes of) graphs are identified with the image of $\GCo_n$ under the map \eqref{equ:hairymap}, plus the extra class identified in step (A).
Hence the cohomology of the associated graded may be identified with the $\GCo_n$ under \eqref{equ:hairymap}, plus the one class.
It follows by a standard spectral sequence argument that the inclusion \eqref{equ:hairymap} is a quasi-isomorphism, up to this class.

%
%
\end{proof}

\begin{figure}
\[
\begin{tikzpicture}[
vert/.style={circle,draw,fill, minimum size=5pt, inner sep=0}, invvert/.style={inner sep=-1,minimum size=-1}, 
ext/.style={circle,draw, minimum size=5pt, inner sep=0},redvert/.style={circle,draw=gray,fill=gray, minimum size=5pt, inner sep=0},
invvert/.style={inner sep=-1,minimum size=-1}, every edge/.style={draw, -triangle 60},
scale=2 ]
\node (v0) at (0.9,1) [vert] {};
\node (v1) at (1.3,0.7) [vert] {};
\node (v2) at (1.9,0.9) [vert] {};
\node (v3) at (1.5,1.4) [vert] {};
\node (v4) at (2,1.6) [vert] {};
\node (v5) at (2.4,1.9)[ redvert] {};
\node (v6) at (2.6,1.4) [redvert] {};
\node (v7) at (1.2,1.7) [redvert] {};
\node (v8) at (0.8,1.8) [redvert] {};
\node (v9) at (2.7,0.9) [redvert] {};
\node (v10) at (1.3,0.4) [redvert] {};
\node (v11) at (1.1,0.2) [redvert] {};
\node (v12) at (0.4,1.6) [redvert] {};
\draw (v0)edge(v1) (v1)edge[gray](v10) (v1)edge(v2) (v0)edge(v3) (v3)edge(v1) (v3)edge(v2) (v2)edge(v4) (v4)edge[gray](v5) (v4)edge[gray](v6) (v7)edge[gray](v3);
\draw[gray] (v6)edge(v9) (v7)edge(v8) (v8)edge(v12) (v10)edge(v11);
\draw[gray] (v3) edge (1.5,2);
\draw[gray] (v4) edge (2,2);
\draw[gray] (v12) edge (0,1.5);
\draw[gray] (v11) edge (0.5,0);
\end{tikzpicture}
\]
\caption{\label{fig:antennas} The antennas of the graph are drawn in gray. The non-antenna-part is the core of the graph.}
\end{figure}
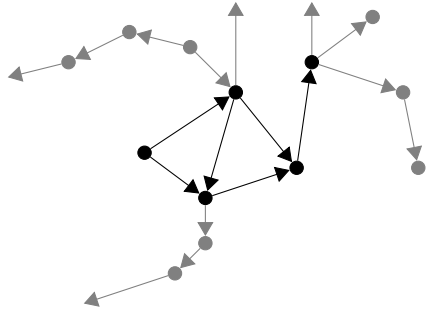


\begin{rem}
 The map from $\GCo_3$ to the bi-Chevalley complex of any Lie bialgebra factors through $\hGCor_3$. In fact, both complexes $\GCo_3$ and $\hGCor_3$ can be thought of as universal versions of the bi-Chevalley complex.
\end{rem}

\subsection{Kontsevich's operads $\Graphs_n$}
M. Kontsevich \cite{K2} defined operads $\Graphs_n$ such that $\Graphs_n(N)$ is spanned by isomorphism classes of undirected graphs with vertices of two sorts: There are $N$ numbered \emph{external} vertices and and arbitrary (finite) number of unlabelled \emph{internal} vertices of valence at least 3. One furthermore requires that each connected component contains at least one external vertex. The operadic compositions are defined as insertions at external vertices, the differential is vertex splitting, pictorially
\begin{align*}
\delta
 \begin{tikzpicture}[baseline=-.65ex]
  \node[ext] (v) at (0,0) {j};
  \coordinate (w1) at (-.6,.5);
\coordinate (w2) at (-.2,.5);
\coordinate (w3) at (.2,.5);
\coordinate (w4) at (.6,.5);
\node at (0,.8){$\cdots$};
\draw (v) edge (w1) edge (w2) edge (w3) edge (w4);
 \end{tikzpicture}
&=
\sum
\begin{tikzpicture}[baseline=-.65ex]
  \node[ext] (v) at (0,0) {j};
\node[int] (w) at (0,.6) {};
  \coordinate (w1) at (-.6,.5);
\coordinate (w2) at (-.2,1);
\coordinate (w3) at (.2,1);
\coordinate (w4) at (.6,.5);
\node at (0,1.2){$\cdots$};
\draw (v) edge (w1) edge (w4) edge (w) (w) edge (w3) edge (w2);
 \end{tikzpicture}
&
\delta
 \begin{tikzpicture}[baseline=-.65ex]
  \node[int] (v) at (0,0) {};
  \coordinate (w1) at (-.6,.5);
\coordinate (w2) at (-.2,.5);
\coordinate (w3) at (.2,.5);
\coordinate (w4) at (.6,.5);
\node at (0,.8){$\cdots$};
\draw (v) edge (w1) edge (w2) edge (w3) edge (w4);
 \end{tikzpicture}
&=
\sum
\begin{tikzpicture}[baseline=-.65ex]
  \node[int] (v) at (0,0) {};
\node[int] (w) at (0,.6) {};
  \coordinate (w1) at (-.6,.5);
\coordinate (w2) at (-.2,1);
\coordinate (w3) at (.2,1);
\coordinate (w4) at (.6,.5);
\node at (0,1.2){$\cdots$};
\draw (v) edge (w1) edge (w4) edge (w) (w) edge (w3) edge (w2);
 \end{tikzpicture}.
\end{align*}
 
An example of a graph occurring in $\Graphs_n$ with some indication of the sign rules used can be found in Figure \ref{fig:graphsexample}. For a more detailed discussion we refer the reader to \cite{K2} and \cite{grt}.

\begin{figure}
\centering
\begin{tikzpicture}
\node [ext] (v6) at (-4.5,-2) {4};
\node [ext] (v5) at (-6.5,-2) {3};
\node [ext] (v7) at (-5.5,-2) {2};
\node [ext] (v4) at (-7.5,-2) {1};
\node [int] (v2) at (-7,-1) {};
\node [int] (v1) at (-6.5,0) {};
\node [int] (v3) at (-6,-1) {};
\draw  (v1) edge (v2);
\draw  (v2) edge (v3);
\draw  (v1) edge (v3);
\draw  (v2) edge (v4);
\draw  (v2) edge (v5);
\draw  (v3) edge (v5);
\draw  (v1) edge (v6);
\draw  (v7) edge (v6);
\draw  (v3) edge (v7);
\node [ext] (v9) at (-3,-2) {1};
\node [ext] (v11) at (-2,-2) {2};
\node [ext] (v14) at (-1,-2) {3};
\node [int] (v12) at (-2.5,0) {};
\node [int] (v10) at (-2,-1) {};
\node [int] (v8) at (-2.5,-0.5) {};
\node [int] (v13) at (-1.5,-0.5) {};

\draw  (v10) edge (v11);
\draw  (v8) edge (v10);
\draw  (v12) edge (v8);
\draw  (v8) edge (v13);
\draw  (v12) edge (v13);
\draw  (v13) edge (v10);
\draw  (v10) edge (v9);
\draw  (v10) edge (v14);
\node [int] (v15) at (-1.5,0) {};
\draw  (v12) edge (v15);
\draw  (v15) edge (v13);
\draw  (v8) edge (v15);
\end{tikzpicture}
\caption{\label{fig:graphsexample} Two examples of graphs in $\Graphs_n$. The right hand graph is zero for $n$ even because it has an odd symmetry, i.~e., a symmetry that acts by an odd permutation on the set of edges. }
\end{figure}
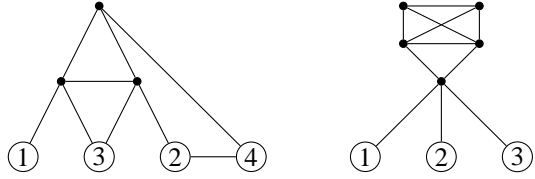

There are two important facts we will need about the operads $\Graphs_n$.
\begin{itemize}
 \item There is an inclusion $\e_n\to \Graphs_n$ which is a quasi-isomorphism, see \cite{LV, K2}.
 \item There is an action of the dg Lie algebra $\GC_n$ on $\Graphs_n$ by operadic derivations, see \cite{grt}.
\end{itemize}

In fact, for our application is is more convenient to enlarge the operad by merely dropping the condition that internal vertices have to be at least trivalent.
We call the resulting operad $\fcGraphs_n$ and cite without proof the following facts \cite{grt}:
\begin{itemize}
 \item There natural inclusion $\Graphs_n\to \fcGraphs_n$ is a quasi-isomorphism.
 \item The action of $\GC_n$ on $\Graphs_n$ extends to an action of $\fGCc_n$ on $\fcGraphs_n$ by operadic derivations.
\end{itemize}

\subsection{The oriented version $\fcGraphso_n$}
Again we may modify M. Kontsevich's construction slightly by allowing directed edges and internal vertices of all valences to obtain an operad $\dfcGraphs_n$.
One has a quasi-isomorphism $\Graphs_n\to\dfcGraphs_n$, so not much has changed, in particular $H(\dfcGraphs_n)=\e_n$.
However, we may pass to a sub-operad
\[
 \op P 
 \subset \dfcGraphs_n
\]
whose elements are series of graphs such that 
\begin{itemize}
 \item There are no directed cycles.
 \item There are no edges starting at the external vertices.
\end{itemize}

We may then pass to a quotient $\fcGraphso_n$ of $\op P$, defined by setting all graphs to zero that contain internal vertices without outgoing edges, or external vertices with outgoing edges. Finally we may define the suboperad $\Graphso_n\subset \fcGraphso_n$ by requiring all internal vertices to be of valence $\geq 2$.

Examples and non-examples are shown in Figure \ref{fig:graphsex}.

\begin{figure}
\centering
\begin{tikzpicture}[every edge/.style={draw, -triangle 60},scale=1]
\node [ext] (v5) at (-2,0) {1};
\node [ext] (v4) at (-1,0) {2};
\node [ext] (v6) at (0,0) {3};
\node [int] (v2) at (-1.5,1) {};
\node [int] (v1) at (-1,2) {};
\node [int] (v3) at (-0.5,1) {};
\draw  (v1) edge (v2);
\draw  (v1) edge (v3);
\draw  (v3) edge (v4);
\draw  (v2) edge (v4);
\draw  (v2) edge (v5);
\draw  (v3) edge (v6);
\node [ext] (v11) at (1,0) {1};
\node [ext] (v12) at (2,0) {2};
\node [ext] (v16) at (3.5,0) {1};
\node [int] (v7) at (-1,3) {};
\node [int] (v9) at (1.5,1) {};
\node [int] (v8) at (1,2) {};
\node [int] (v10) at (2,2) {};
\node [int] (v13) at (3.5,1) {};
\node [int] (v14) at (4.5,0.5) {};
\node [int] (v15) at (4.5,1.5) {};
\draw  (v7) edge[bend left] (v1);
\draw  (v7) edge[bend right] (v1);
\draw  (v9) edge (v8);
\draw  (v10) edge (v9);
\draw  (v9) edge (v11);
\draw  (v8) edge (v10);
\draw  (v10) edge (v12);
\draw  (v13) edge (v14);
\draw  (v15) edge (v14);
\draw  (v15) edge (v13);
\draw  (v13) edge (v16);
\end{tikzpicture}
\caption{\label{fig:graphsex} The left graph is an example of a graph in $\Graphso$. The middle graph is not admissible because it contains an oriented cycle. The right hand graph is not admissible because it contains an internal vertex without outgoing edges. }
\end{figure}
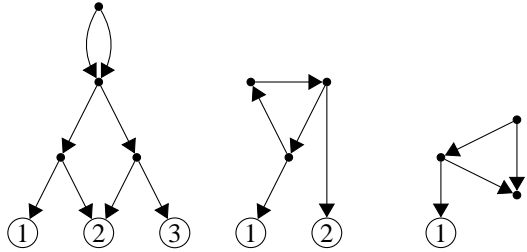

There is an injective operad map 
\[
 \te_{n-1} \to \Graphso_n
\]
sending the commutative product operation $m_2\in \te_{n-1}(2)$ to the graph 
\begin{equation}\label{equ:graphproduct}
 \begin{tikzpicture}[scale=.5]
  \node[ext] at (0,0) {};
  \node[ext] at (1,0) {};
 \end{tikzpicture}
\end{equation}
and the $\hoLie_n$ operations $\mu_N\in\te_{n-1}(N)$ ($N\geq 2$) to the graphs
\begin{equation}\label{equ:graphfork}
\underbrace{
  \begin{tikzpicture}[scale=1]
  \node[int] (v) at (0,1) {};
  \node[ext] (v0) at (-1.3,0) {1};
  \node[ext] (v4) at (1.3,0) {N};
  \node[ext] (v1) at (-.7,0) {2};
  \node (v2) at (0,0) {$\cdots$};
  \node[ext] (v3) at (.7,0) {\tiny{N-1}};
  \draw[-triangle 60] (v) edge (v1)  edge (v3) edge (v0) edge (v4);
 \end{tikzpicture}}_{N \times}
\end{equation}

\begin{thm}\label{thm:enandGraphsor}
 The map $\te_{n-1} \to \Graphso_n$ above is a quasi-isomorphism.
\end{thm}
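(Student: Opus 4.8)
The plan is to prove that $\te_{n-1}\to\Graphso_n$ is a quasi-isomorphism by relating both sides to the oriented graph complex $\hGCor_n$ via deformation complexes, and then invoking Proposition \ref{prop:GChGC} together with Theorem \ref{thm:main}-type bookkeeping. Concretely, I would first observe that $\Graphso_n$, like its non-oriented cousin $\fcGraphs_n$, is quasi-free as an operad (built on generators given by graphs, with the differential adding a vertex/edge), so one may set up the deformation complex $\Def(\hoe_{n-1}\to\Graphso_n)$, or rather $\Def(\Omega(\e_{n-1}^\vee)\to\Graphso_n)$, along the lines of \cite[section 2, Appendix]{grt}. The key computation is that this deformation complex is, up to a shift and completion, precisely $\hGCor_n$: a derivation of the quasi-free operad is determined by where it sends the cogenerators, the ``external legs'' in $\hGCor_n$ correspond exactly to the inputs of the operad, and the two non-trivial pieces of the differential \eqref{equ:hGCodifferential} match the two ways the operadic differential and the cooperad cobar differential interact. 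This is the oriented analogue of the statement (implicit in \cite{grt}) that $\GC_n$ computes the deformations of $\Graphs_n$, respectively $\e_n$.

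Next I would use Lemma \ref{lem:defpresqiso}: since we want to compare $\te_{n-1}$ with $\Graphso_n$, and $\te_{n-1}=\Omega(\dots)/(\text{relations})$ is not itself quasi-free, I would instead factor the map as $\hoe_{n-1}\to\te_{n-1}\to\Graphso_n$ and also consider $\hoe_{n-1}\to\e_{n-1}$. By Proposition \ref{prop:tenenqiso} the map $\te_{n-1}\to\e_{n-1}$ is a quasi-isomorphism, so it suffices to show $\te_{n-1}\to\Graphso_n$ induces an isomorphism on cohomology; and by Lemma \ref{lem:defpresqiso} applied to $\Omega(\e_{n-1}^\vee)\to\te_{n-1}\to\e_{n-1}$ the deformation complexes agree. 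The upshot is a commuting square of dg Lie algebras whose cohomologies we must identify: $\Def(\hoe_{n-1}\to\Graphso_n)\simeq\hGCor_n$ on one side, and $\Def(\hoe_{n-1}\to\e_{n-1})\simeq\Der(\e_{n-1})[\text{shift}]$ (the ``ordinary'' side, which by the results recalled in \cite{grt} is essentially $\GC_{n-1}$ plus the wheel/Deligne classes) on the other. Proposition \ref{prop:GChGC} says the inclusion $\GCor_n\to\hGCor_n$ is a quasi-isomorphism up to the single class \eqref{equ:singleclass}, and one checks by a direct degree/valence inspection that this extra class is precisely the one that must be killed when passing from the deformation-theoretic count to the actual map $\te_{n-1}\to\Graphso_n$ (it corresponds to rescaling the generator $\mu_N$, i.e.\ it lies in the image of the deformations that are already ``seen'' inside $\te_{n-1}$), so on the nose the map $\te_{n-1}\to\Graphso_n$ is a quasi-isomorphism.

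I expect the main obstacle to be the bookkeeping in the identification $\Def(\hoe_{n-1}\to\Graphso_n)\cong\hGCor_n$: one must check carefully that the orientation constraints (no directed cycles, no edges out of external vertices, internal vertices must have an outgoing edge) on the operad side translate exactly into the defining conditions on $\hGCor_n$, and that the three-term differential \eqref{equ:hGCodifferential} is reproduced with the correct signs and combinatorial factors $1/j!$. A secondary subtlety is keeping track of the Betti-number (loop-order) grading throughout, since the genus of a graph is preserved by all the maps and one wants the final isomorphism to respect it; this is what ultimately feeds into the ``preserves the additional grading'' clause of Theorem \ref{thm:main}. Everything else is a formal consequence of Lemma \ref{lem:defpresqiso}, Proposition \ref{prop:tenenqiso}, Proposition \ref{prop:GChGC}, and the fact that $\Graphso_n$ is quasi-free.
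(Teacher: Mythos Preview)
Your proposal has a genuine logical gap: you are trying to deduce that $\te_{n-1}\to\Graphso_n$ is a quasi-isomorphism from the statement that the induced map on deformation complexes $\Def(\hoe_{n-1}\to\te_{n-1})\to\Def(\hoe_{n-1}\to\Graphso_n)$ is a quasi-isomorphism. This is the \emph{converse} of Lemma \ref{lem:defpresqiso}, and it simply does not hold. Two operads under $\hoe_{n-1}$ can have quasi-isomorphic deformation complexes without being quasi-isomorphic to each other; the deformation complex records derived maps out of $\e_{n-1}^\vee$, not the homotopy type of the target operad. So even if every identification you describe were carried out with correct signs, the conclusion would not follow.

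There is also a circularity problem. In the paper, Theorem \ref{thm:enandGraphsor} is an \emph{input} to the proof of Theorem \ref{thm:main}: it is precisely what justifies, via Lemma \ref{lem:defpresqiso}, the quasi-isomorphism $\Def(\hoe_{n}\to\te_{n})\to\Def(\hoe_{n}\to\Graphso_{n+1})$ in the zig-zag. Your argument, by contrast, needs to know $H(\GCor_n)$ (to compare with the ``ordinary side'' $\Def(\hoe_{n-1}\to\e_{n-1})$), which is exactly what Theorem \ref{thm:main} computes. You cannot invoke ``Theorem \ref{thm:main}-type bookkeeping'' here without assuming what you are trying to prove. Relatedly, the identification of the deformation complex with $\hGCor_n$ is not on the nose: in the paper it arises only as a subcomplex after taking the $E_1$-page of a spectral sequence with respect to $d_\wedge$, so your step ``$\Def(\hoe_{n-1}\to\Graphso_n)\cong\hGCor_n$'' is not literally correct.

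The paper's actual argument is direct and elementary by comparison. One writes down a one-sided inverse $\Graphso_n\to\te_{n-1}$ of $\mathbb{S}$-modules (sending any graph with a vertex having $\geq 2$ incoming edges to zero), which already shows injectivity on cohomology. This splits $\Graphso_n\cong\te_{n-1}\oplus{\Graphso_n}'$, where ${\Graphso_n}'$ consists of graphs with at least one ``bad'' vertex (one with $\geq 2$ incoming edges). Acyclicity of ${\Graphso_n}'$ is then shown by a short spectral sequence argument: for each bad graph one identifies the lowest-numbered external vertex with a bad ancestor and the unique directed path (the ``bad string'') connecting them, filters by the length of this string, and observes that the associated graded looks like a bar resolution and is acyclic. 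No deformation complexes, no appeal to $\hGCor_n$ or $\GCor_n$, and no risk of circularity.
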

In other words $\Graphso_n$ is quasi-isomorphic to $\hoe_{n-1}$.
\begin{proof}
Note that there is a natural surjective map of $\mathbb{S}$-modules of complexes
\[
 \Graphso_n\to \te_{n-1}
\]
sending all graphs that contain a vertex with more then one incoming edge to 0. Note that this is not a map of operads. Nevertheless it is a one-sided inverse to the natural map $\te_{n-1}\to \Graphso_n$. It follows that the identity map $\e_{n-1} \to \e_{n-1}$ may be written as
\[
 \e_{n-1} = H(\te_{n-1}) \to H(\Graphso_n) \to H(\te_{n-1}) = \e_{n-1}
\]
In particular, the map $H(\te_{n-1}) \to H(\Graphso_n)$ is injective. 

Moreover $\Graphso_n$ splits as complexes
\[
 \Graphso_n \cong \te_{n-1} \oplus {\Graphso_n}'
\]
where ${\Graphso_n}'$ is spanned by graphs with at least one vertex with more than one incoming edge. Let us call such vertices \emph{bad vertices}, and a graph containing a bad vertex a \emph{bad graph}. 
To prove the Theorem, we need to show that ${\Graphso_n}'$ is acyclic.

Note that for each bad graph there is an external vertex with the lowest number (say $j$), which has a bad vertex as ancestor. Let us call $j$ the bad index. There is also a unique string of edges connecting the vertex $j$ to the bad vertex. Let us call this string the bad string. See Figure \ref{fig:badstring} for an illustration of these concepts.
Note that at this point we use that each internal vertex has at least one outgoing edge.

The differential may (i) increase the bad index or (ii) increase the length of the bad string.
One can set up a spectral sequence on the length of the bad string, such that the first page differential increases this length by one (and leaves the bad index the same).
The resulting complex is acyclic. (It looks like the bar resolution of a free cocommutative coalgebra, see Figure \ref{fig:badstring}.) It follows that ${\Graphso_n}'$ is acyclic as claimed.
 
\end{proof}

\begin{figure}
\centering
\begin{tikzpicture} 
\node [int] (v1) at (-0.5,3.5) {};
\node [int] (v2) at (0,4.5) {};
\node [int] (v4) at (0.5,3) {};
\node [int,red] (v7) at (0.5,2) {};
\node [int] (v3) at (1.5,3.5) {};
\node [ext] (v6) at (-1,4.5) {1};
\node [ext] (v5) at (-1,2.5) {2};
\node [ext] (v9) at (0,1.5) {3};
\node [ext] (v8) at (1,1.5) {4};
\draw [-triangle 60] (v2) edge (v1);
\draw [-triangle 60] (v2) edge (v3);
\draw [-triangle 60] (v1) edge (v4);
\draw [-triangle 60] (v4) edge (v3);
\draw [-triangle 60] (v1) edge (v5);
\draw [-triangle 60] (v2) edge (v6);
\draw [-triangle 60,red] (v4) edge (v7);
\draw [-triangle 60] (v7) edge (v8);
\draw [-triangle 60,red] (v7) edge (v9);
\draw [-triangle 60] (v3) edge (v8);
\draw [-triangle 60] (v2) edge (v4);
\end{tikzpicture}
\caption{\label{fig:badstring} An admissible graph, with bad index 3. The bad string is drawn in red. }
\end{figure}
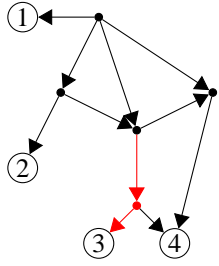

\begin{rem}
 The operad $\fcGraphso_3$ is defined so that it acts naturally on $S\alg g[-1]$, for $\alg g$ an $\infty$-Lie bialgebra. In fact, it can be considered as one version of the operad of natural operations on $S\alg g[-1]$.\footnote{There is an even more natural version which has incoming legs in addition. However, we stick to the simpler one above.} The action of $\te_2$ on $S\alg g[-1]$ naturally factorizes through $\fcGraphso_3$.
\end{rem}

Finally an important fact is that there is an action of $\fGCco_n$ on $\fcGraphso_n$ by operadic derivations. It is constructed as follows. First note that by generalities on operadic twisting (cf. \cite{grt} and \cite{vastwisting}) one has an action of $\dfGCc_n$ on $\dfcGraphs_n$. This action cannot create directed cycles in graphs if there were none before in the acting graph or the acted upon graph. Hence the action restricts to an action of $\fGCco_n$ on $\op P$.
Next, if a graph occurring in an element of $\op P$ has an internal vertex without outgoing edges, then so do the graphs produced after acting with an element of $\fGCco_n$. The same holds for external vertices with outgoing edges. Hence the action descends to the quotient so that we obtain the desired action of $\fGCco_n$ on $\fcGraphso_n$ by operadic derivations. The action cannot create uni-valent vertices in graphs if there were none before, so it restricts naturally to an action of $\GCo_n$ on $\Graphso_n$.

\begin{rem}\label{rem:explaction}
Explicitly, the formula for the action has the following form. Suppose we have a graph $\gamma\in \fGCco_{n}$, and a graph $\Gamma\in \fcGraphso_n(N)$. 
Let us form the graph $\gamma_1\in \fcGraphso_n(1)$ by declaring  one vertex external. Note that if $\gamma$ has more then one vertex without outgoing edges then $\gamma_1=0$, and if it has exactly one such vertex, then $\gamma_1$ is obtained by declaring that vertex external.
Then, disregarding the precise signs and prefactors the action is 
\[
\gamma \cdot \Gamma = \sum_v \pm \Gamma \bullet_v \gamma
+
\gamma_1\circ_1 \Gamma + \sum_j \pm \Gamma \circ_j \gamma_1.
\]
Here $\circ_j$ denotes the operadic composition as usual and the first sum is over internal vertices of $\Gamma$, with $\bullet_v\gamma$ denoting the operation of inserting $\gamma$ in place of $v$.
\end{rem}

\section{The cohomology of the loopless graph complex, and the proof of Theorem \ref{thm:main}}\label{sec:theproof}
\subsection{Recollections from \cite{grt}}
The proof of Theorem \ref{thm:main} will make essential use of the results and techniques of \cite{grt}.
Recall from section \ref{sec:graphcomplexes} above that there is an action of the graph complex $\fGCc_n$ by operadic derivations on the operad $\fcGraphs_n$, which is in turn quasi-isomorphic to $\e_n$.
From this it follows that there is a map of Lie algebras
\begin{equation}
H(\fGCc_n)\to H(\Der(\hoe_n))
\end{equation}
where $\Der(\hoe_n)$ is the complex of derivations of the operad $\hoe_n$, see \cite[section 2]{grt} for its definition.
Concretely, the map can be realized as follows. There are maps of complexes
\[
\Der(\hoe_n) \to \Def(\hoe_n\to \fcGraphs_n)[1] \leftarrow \fGCc_n
\]
where $\Def(\hoe_n\to \fcGraphs_n)$ is the operadic deformation complex of the composition $\hoe_n\to \e_n\to \fcGraphs_n$, see again \cite[section 2]{grt} for the definition.
In fact, it turns out that one may identify a subcomplex of ``connected'' elements (see \cite[section 4]{grt})
\[
\Def(\hoe_n\to \fcGraphs_n)_\conn\subset \Def(\hoe_n\to \fcGraphs_n)
\]
into which the image of $\fGCc_n$ falls, and the cohomology of the full complex is a symmetric product of that of the connected part.
One main result of \cite{grt} is that the induced map from the graph cohomology to the cohomology of the connected part is an isomorphism, up to one class, i.~e.,
\[
\K \oplus H(\fGCc_n) \cong H(\Def(\hoe_n\to \fcGraphs_n)_\conn[1]).
\]
The extra class has a nice interpretation: One may enlarge the Lie algebra structure on the graph complex to a semidirect product $\K \ltimes \fGCc_n$, where the extra generator acts on a graph by multiplication with the first Betti number.

\subsection{The proof of Theorem \ref{thm:main}}
One can mimick the construction outlined in the last section, replacing the graph complex $\fGCc_n$ by the oriented graph complex $\GC_{n+1}^{or}$, and replacing the operad $\fcGraphs_n$ by the operad $\Graphso_{n+1}$, which is also a model for the $\e_n$ operad by Theorem \ref{thm:enandGraphsor}. 
Since there is an action of $\GC_{n+1}^{or}$ on $\Graphso_{n+1}$ by operadic derivations we obtain a map of complexes 
\[
\GCo_{n+1} \to \Def(\hoe_n\to \Graphso_{n+1})[1]. 
\]
As in \cite[section 4]{grt}, we may identify a ``connected'' subcomplex of the complex on the right hand side, into which the image of the graph complex falls. 
\[
\GCo_{n+1} \to \Def(\hoe_n\to \Graphso_{n+1})_\conn[1]. 
\]
Furthermore, there is one special class in the complex on the right hand side. The underlying derivation of $\e_n$ rescales the Lie bracket.
Concretely, the map sends the $\hoLie_n$ operations $\mu_N\in \hoe_n(N)$ ($N\geq 2$) to multiples of the graphs \eqref{equ:graphfork} above, and all other generators to zero.
We denote this class $T$ for concreteness.
The main step in the proof of Theorem \ref{thm:main} is to show the following Proposition.
\begin{prop}\label{prop:premain}
The map 
\[
\K T \oplus \GCo_{n+1} \to \Def(\hoe_n\to \Graphso_{n+1})_\conn[1]. 
\]
is a quasi-isomorphism of complexes.
\end{prop}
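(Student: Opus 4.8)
The plan is to follow the strategy of \cite[Section 4]{grt}, substituting the oriented operad $\Graphso_{n+1}$ for the commutative operad $\fcGraphs_n$ used there, and reducing the computation of $H(\Def(\hoe_n\to\Graphso_{n+1})_\conn[1])$ to the hairy oriented graph complex $\hGCor_{n+1}$ introduced in Section \ref{sec:beardGC}. The endgame is then Proposition \ref{prop:GChGC}, which already does the analytic work of contracting the hair.

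The first task is to unravel the deformation complex. Since $\hoe_n=\Omega(\e_n^\vee)$, an arity-$N$ element of $\Def(\hoe_n\to\Graphso_{n+1})$ is an $S_N$-equivariant map $\e_n^\vee(N)\to\Graphso_{n+1}(N)$, and its differential is the sum of the internal vertex-splitting differential of $\Graphso_{n+1}$ and the bracket with the twisting element encoding the structure map $\hoe_n\to\te_n\to\Graphso_{n+1}$, i.e. the forks \eqref{equ:graphfork}. Restricting to the connected subcomplex $\Def(\hoe_n\to\Graphso_{n+1})_\conn$ (the part into which the graph complex maps, defined as in \cite[Section 4]{grt}) and unpacking $\e_n^\vee$ through the cooperad version of $\e_n\cong\Com\circ\Lie\{n-1\}$, the $N$ labelled external vertices become, after the symmetric (co)invariants, an unordered set of outgoing external legs; I would check that the residual decorations built from the $\Lie$-cofactor form an acyclic piece (equivalently, first pass to the quasi-isomorphic $\te_n$-model of $\Graphso_{n+1}$, but then one must verify that this replacement respects the connectivity subcomplex, or circumvent it as in loc. cit.). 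What remains is exactly $\hGCor_{n+1}$: the internal differential of $\Graphso_{n+1}$ becomes the vertex-splitting term of \eqref{equ:hGCodifferential}, and the bracket with the forks becomes its two leg-/vertex-creating terms, so $\Def(\hoe_n\to\Graphso_{n+1})_\conn[1]\cong\hGCor_{n+1}$ as complexes. Under this identification the submodule $\GCo_{n+1}$ is sent, via its action on $\Graphso_{n+1}$ (Remark \ref{rem:explaction}), precisely onto the image of the map \eqref{equ:hairymap}, and the distinguished class $T$ — the derivation $\mu_N\mapsto$ (a multiple of) the fork \eqref{equ:graphfork}, rescaling the Lie bracket — goes to the cocycle \eqref{equ:singleclass}; both compatibilities are a direct check on the explicit formulas.

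Granting all this, Proposition \ref{prop:premain} is a reformulation of Proposition \ref{prop:GChGC}: the latter says precisely that the inclusion of $\GCo_{n+1}$ together with the one-dimensional span of the cocycle \eqref{equ:singleclass} into $\hGCor_{n+1}$ is a quasi-isomorphism, which under the identification above is exactly the statement that $\K T\oplus\GCo_{n+1}\to\Def(\hoe_n\to\Graphso_{n+1})_\conn[1]$ is a quasi-isomorphism. I expect the main obstacle to be the identification step: pinning down the dictionary between the connected deformation complex and $\hGCor_{n+1}$ with the correct signs and degree conventions, carrying out the $\Com\circ\Lie$-unpacking, and in particular matching the oriented constraints (external vertices are sinks, internal vertices must have an outgoing edge, no directed cycles) with the defining conditions of $\hGCor_{n+1}$, while making sure the connectivity subcomplex behaves well under the auxiliary quasi-isomorphisms used to simplify source and target. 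Once that dictionary is in place, the remainder is bookkeeping together with the already-proven Proposition \ref{prop:GChGC}.
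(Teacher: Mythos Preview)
Your overall route --- reduce the connected deformation complex to $\hGCor_{n+1}$ and then invoke Proposition \ref{prop:GChGC} --- is exactly the paper's, but two steps are not right as stated.

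First, the assertion that $\Def(\hoe_n\to\Graphso_{n+1})_\conn[1]\cong\hGCor_{n+1}$ \emph{as complexes} is too strong. Elements of the deformation complex are maps out of all of $\e_n^\vee$, not only out of the $\hoLie_n$-cogenerators, so the complex is strictly larger than $\hGCor_{n+1}$. What the paper actually does is split the differential as $d=\delta+d_\wedge+d_C$, take the descending filtration by number of edges so that on the associated graded only $d_\wedge$ survives, and then compute $H(\,\cdot\,,d_\wedge)$ as in \cite[Section 4]{grt}. This identifies the cohomology with the \emph{subcomplex} $C$ of maps that vanish on all generators except the $\mu_N$ and whose image has univalent external vertices; it is this $C$ that is isomorphic to $\hGCor_{n+1}$. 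Your sentence about ``the residual decorations built from the $\Lie$-cofactor forming an acyclic piece'' has the roles reversed: it is the $\Com$-part (together with the higher external-vertex valences) that gets killed, and the mechanism is precisely $d_\wedge$, not something one can bypass by a direct cooperadic identification or by swapping the target for $\te_n$.

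Second, your claim that under the action map of Remark \ref{rem:explaction} the image of $\GCo_{n+1}$ lands ``precisely onto the image of \eqref{equ:hairymap}'' is false on the chain level: the action has the extra summand $\gamma_1\circ_1\Gamma$, so the two chain maps differ. The paper handles this by exhibiting, for a graph cocycle $\Gamma$, an explicit element $X_\Gamma$ (sending the counit of $\e_n^\vee$ to $\Gamma_1$) whose coboundary is the difference, so the two maps agree on cohomology. You need this check (or an equivalent one) before you can conclude.
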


Let us use this result to prove Theorem \ref{thm:main}.
\begin{proof}[Proof of Theorem \ref{thm:main}]
We have the following zig-zag of quasi-isomorphisms of complexes
\begin{multline*}
\K \oplus \GCo_{n+1}\to  
\Def(\hoe_{n} \to \Graphso_{n+1})_\conn[1]
\leftarrow 
\Def(\hoe_{n} \to \te_{n})_\conn[1]
\\
\to \Def(\hoe_{n} \to \e_{n})_\conn[1]
 \to
 \Def(\hoe_{n} \to \fcGraphs_{n})_\conn[1]
\leftarrow 
\K \oplus \fGCc_{n}.
\end{multline*}
Here the leftmost arrow is a quasi-isomorphism because of Proposition \ref{prop:premain}, the next two arrows because of Theorem \ref{thm:enandGraphsor} and Proposition \ref{prop:tenenqiso}, combined with Lemma \ref{lem:defpresqiso}. The fact that the last two arrows are quasi-isomorphisms is shown in \cite{grt}.
It follows in particular that 
\[
\K \oplus H(\GCo_{n+1}) \cong \K \oplus H(\fGCc_{n})
\]
as graded vector spaces. 
All complexes in the above zig-zag can be endowed with an extra grading by the first Betti number of graphs, preserved by the maps and differentials. Hence the identification above preserves that grading. The two copies of $\K$ are the only pieces with Betti number 0, while the remainder has Betti numbers $\geq 1$. Hence the two ``extra classes'' are mapped onto each other, and so are $H(\GCo_{n+1})$ and $H(\fGCc_{n})$
so that one has an isomorphism of graded vector spaces
\[
H(\GCo_{n+1}) \cong H(\fGCc_{n}).
\]
We still have to check that this map preserves the Lie brackets.
First, since $\GCo_{n+1}$ acts on a model for $e_n$ by operadic derivations, the inclusion 
\[
 \K \ltimes H(\GCo_{n+1}) \to H(\Der(\hoe_n)) 
\]
preserves the Lie bracket. Alternatively, one can give a slightly more detailed proof along the lines of the proof of Proposition 5.4 of \cite{grt}.
Hence we have maps of Lie algebras 
\[
\K \ltimes H(\GCo_{n+1}) \to H(\Der(\hoe_n)) 
\leftarrow
\K \ltimes H(\fGCc_{n}).
\]
Both maps have the same image and are isomorphisms of Lie algebras onto their images, so both Lie algebras must be isomorphic.
\end{proof}

It remains to show the Proposition.
\begin{proof}[Proof of Proposition \ref{prop:premain}]
The proof follows closely that of Theorem 1.3 in \cite{grt}.
Recall that the map 
\[
\hoe_n \to \Graphso_{n+1}
\] 
sends the product generator to the graph with two vertices and no edges \eqref{equ:graphproduct}, and the $\hoLie_n$ generator $\mu_N$ to the graph \eqref{equ:graphfork}.
Accordingly, the differential on  $\Def(\hoe_n \to \Graphso_{n+1})$ may be split into three parts
\[
d = \delta + d_\wedge + d_C
\]
where $\delta$ comes from the differential on $\Graphso_{n+1}$, $d_\wedge$ comes from the piece of the map involving the product generator and $d_C$ from the piece involving the $\hoLie_n$ generators.
We will use the same symbols to denote the respective pieces of the differential on the subcomplex  $\Def(\hoe_n \to \Graphso_{n+1})_\conn$.
There are several filtrations on that complex, coming from the number of internal or external vertices and the number of edges of graphs in $\Graphso_{n+1}$. Consider the decreasing complete filtration by the number of edges, then the differential on the associated graded is given by $d_\wedge$, which does not create edges, while $\delta$ and $d_C$ do not contribute since they create $\geq 1$ edges in graphs. As in loc. cit. the cohomology wrt. $d_\wedge$ may be computed and can be identified with the subcomplex of  $\Def(\hoe_n \to \Graphso_{n+1})_\conn$ consisting of maps that send all generators to zero except the $\hoLie_n$ generators $\mu_N$ ($N\geq 1$), and for which the image of each $\mu_N$ is a graph whose external vertices have valence one.
As those elements form a subcomplex, say 
\[
C\subset \Def(\hoe_n \to \Graphso_{n+1})_\conn
\]
we know that the inclusion is a quasi-isomorphism by a standard spectral sequence argument.
That subcomplex furthermore is isomorphic to the complex $\hGCor_{n+1}$ from section \ref{sec:beardGC}. Proposition \ref{prop:GChGC} then shows that its cohomology is that of the graph complex $\GCo_{n+1}$, up to the one class $T$ described above. 
To show the first statement of the Proposition we yet have to verify that the inclusion $H(\fGCco_{n+1})\to H(\Def(\hoe_n \to \fcGraphso_{n+1})_\conn)$ just discussed agrees with the map $H(\GCo_{n+1})\to H(\Def(\hoe_n \to \Graphso_{n+1})_\conn)$ defined through the action of $\GCo_{n+1}$ on $\Graphso_{n+1}$. This is not totally obvious since the maps do not agree on chains. However, a small graphical calculation shows that for a graph cocycle $\Gamma$ the difference between the two maps on chains is the coboundary of an element $X_\Gamma\in \Def(\hoe_n \to \Graphso_{n+1})_\conn$. Concretely, $X_\Gamma$ sends the counit of $e_n^\vee$ to the graph $\Gamma_1\in \fcGraphso_{n+1}$ (see Remark \ref{rem:explaction} for this notation). 
\end{proof}

%
%

\begin{rem}\label{rem:wheelclasses}
The simplest classes in the graph complex $\fGCc_n$ are the wheel graphs, composed of $j$ two-valent vertices, where $j=1,5,9,\dots$ if $n$ is even and $j=3,7,11,\dots$ when $n$ is odd. For example the 3-wheel occurring in the odd case is the following graph.

\[
 \begin{tikzpicture}
  \node[int] (v0) at (0:1) {};
\node[int] (v2) at (120:1) {};
\node[int] (v4) at (240:1) {};
\draw (v0) edge (v2) edge (v4) (v2) edge (v4);
 \end{tikzpicture}
\]
In $\GCo_{n+1}$ one has even length wheel graphs, with alternating edge directions. Length $2,6,10,\dots$ occur for $n$ even, while lengths $4,8,12,\dots$ occur for $n$ odd by symmetry reasons.  
For example, the following graph appears in $\GCo_{3}$
\[
 \begin{tikzpicture}
  \node[int] (v0) at (0:1) {};
\node[int] (v1) at (60:1) {};
\node[int] (v2) at (120:1) {};
\node[int] (v3) at (180:1) {};
\node[int] (v4) at (240:1) {};
\node[int] (v5) at (300:1) {};
\draw[-triangle 60] (v0) edge (v1) edge (v5) (v2) edge (v1) edge (v3) (v4) edge (v3) edge (v5);
 \end{tikzpicture}.
\]
In general the wheel of length $k$ in $\fGCc_n$ corresponds to the wheel of length $k+1$ in $\GCo_{n+1}$.
To see this, first note that the identification of $H(\fGCc_n)$ with $H(\GCo_{n+1})$ preserves the degree and genus. Hence the only candidate graphs to correspond to the $k$-wheel are $k+1$-wheels, with some orientation on the edges.
One may check that any class may be represented by graphs with no vertices with one incoming and one outgoing vertex. This leaves only one possible graph, the $k+1$-wheel with alternatingly oriented edges.
\end{rem}

\section{Application: Cycle-free formality morphisms and star products}
\label{sec:looplessdefq}
One application of the oriented graph complex $\GCo_2$ is in understanding the obstruction theory underlying infinite dimensional deformation quantization, as discussed in the introduction.
In this section, we will clarify a few points not treated there and finally show Corollary \ref{cor:looplessformality}.

First, one notes that one can show along the lines of \cite{vasilystable} that the obstruction theory for cycle-free universal formality morphisms is governed by $\fGCo_2$. We take this fact for granted. Using Theorem \ref{thm:main} one can see that in the relevant degrees 0 and 1 the cohomology agrees with that of the subcomplex $\GCo_2$, so we will take this complex as governing our deformation theory in the following.

As mentioned in the introduction, $H^1(\GCo_2)=H^1(\fGCo_2)$ is one-dimensional by Theorem \ref{thm:main}. Hence there is only one obstruction to the existence of cycle-free formality morphism, which however gets hit unfortunately. 

Another fact one learns from $H^1(\GCo_2)\cong \K$ is that the tangent space to the space of Maurer-Cartan elements modulo gauge at 0 is one-dimensional. This can be used to understand the space of Maurer-Cartan elements as a whole.

Note that $\GCo_2$ is graded by the first Betti numbers of graphs (-as are all relevant graph complexes the author is aware of-).
There are hence endomorphisms 
\[
\Phi_\lambda \colon \GCo_2 \to \GCo_2
\]  
rescaling a graph $\Gamma$ of genus $b_1$ by $\lambda^{b_1}$, for $\lambda\in \K$. In particular, for any (gauge non-trivial) Maurer-Cartan element $m$ we can define a family of Maurer-Cartan elements $m_\lambda=\Phi_\lambda(m)$, in particular $m_0=0$. It follows in particular that the space of Maurer-Cartan elements is connected.
Since we know that the tangent space at the identity is one-dimensional one may deduce that the space of Maurer-Cartan elements modulo gauge is a line.

Let us check that a non-trivial Maurer-Cartan element (say $m$) may be constructed inductively as
\[
m=m_4+m_5+\cdots
\]
where $m_k$ is a linear combination of graphs with $k$ vertices. 
Here $m_4$ is understood to be a multiple the Shoikhet cocycle depicted in Example \ref{ex:shoiobstruction}.
Possible obstructions to determining $m_k$ knowing $m_4,\dots, m_{k-1}$
fall into $H^2(\GCo_2)$, which may be identified with $H^2(\fGCc_1)\cong \K$ by Theorem \ref{thm:main}. The one possible obstruction class in $H^2(\fGCc_1)$ is the 3-wheel depicted in Remark \ref{rem:wheelclasses}. The corresponding class in $H^2(\GCo_2)$ has 4 vertices.
Note that the first (possible) obstruction that actually occurs is given by the bracket $[m_4,m_4]$ and is a linear combination of graphs with 7 vertices. 
Hence no obstruction is hit, and a Maurer-Cartan element may be constructed step-by-step and rationally, thus showing one claim in Corollary \ref{cor:looplessformality}.

Next, let us fix one such Maurer-Cartan element $m$. It determines a non-standard $L_\infty$ structure on $\Tpoly$.  
Let us construct an $L_\infty$ quasi-isomorphisms
\[
 \Tpoly'\to \Dpoly
\]
given by universal cycle-free formulas using obstruction theory, where $\Tpoly'$ is $\Tpoly$ with the non-standard $L_\infty$ structure.
The obstructions again land in $H^1(\fGCo_2)$, which is one-dimensional. However, we may choose the Maurer-Cartan element (the prefactor in $m_4$ to be precise) so that the obstruction is not hit. Since there are no further cohomology classes in $H^1(\fGCo_2)$, the existence of the formality morphism is unobstructed after this.

Possible choices to be made during the construction are parameterized by $H^0(\fGCco_2)=H^0(\GC_1)=0$. Hence the formality morphism is unique up to gauge, and can be constructed by obstruction theory, hence rationally. This shows Corollary \ref{cor:looplessformality}.

\nocite{MerkulovPropProfile, MarklVoronov,MarklMerkulovShadrin}
\bibliographystyle{plain}
\bibliography{../biblio}

\end{document}